\declaretheorem[name=Theorem,numberwithin=section]{thm}
\newtheorem{lemma}[thm]{Lemma}
\newtheorem{prop}[thm]{Proposition}
\newtheorem{cor}[thm]{Corollary}
\newtheorem{conj}[thm]{Conjecture}
\newtheorem{question}[thm]{Question}
\theoremstyle{definition}
\newtheorem{construction}[thm]{Construction}
\newtheorem{definition}[thm]{Definition}
\newtheorem{example}[thm]{Example}
\theoremstyle{remark}
\newtheorem{remark}[thm]{Remark}
\newcommand{\kk}{\mathbf{k}}
\newcommand{\Z}{\mathbb{Z}}
\newcommand{\fS}{\mathfrak{S}}
\newcommand{\N}{\mathbb{N}}
\newcommand{\F}{\mathbb{F}}
\newcommand{\qq}{\mathfrak{q}}
\newcommand{\mm}{\mathfrak{m}}
\newcommand{\cG}{\mathcal{G}}
\renewcommand{\aa}{\mathfrak{a}}
\newcommand{\hgt}{\operatorname{ht}}
\newcommand{\Spec}{\operatorname{Spec}}
\newcommand{\id}{\operatorname{id}}
\renewcommand{\Im}{\operatorname{Im}}
\newcommand{\Tr}{\operatorname{Tr}}
\newcommand{\spn}{\operatorname{span}}
\newcommand{\Hom}{\operatorname{Hom}}
\newcommand{\GL}{\operatorname{GL}}
\newcommand{\rad}{\operatorname{rad}}
\newcommand{\leqdom}{\leq_{\text{dom}}}
\newcommand{\geqdom}{\geq_{\text{dom}}}
\newcommand{\lessdom}{<_{\text{dom}}}
\newcommand{\into}{\hookrightarrow}
\newcommand{\transfer}[1]{\operatorname{Tr}^{#1}}
\newcommand{\imtransfer}[1]{I^{#1}}
\title{Symmetric Group Fixed Quotients of Polynomial Rings}
\author{Alexandra Pevzner}
\date{\today}
\begin{document}

\begin{abstract}
    Given a representation of a finite group $G$ over some commutative base ring $\mathbf{k}$, the \textit{cofixed space} is the largest quotient of the representation on which the group acts trivially. If $G$ acts by $\mathbf{k}$-algebra automorphisms, then the cofixed space is a module over the ring of $G$-invariants. When the order of $G$ is not invertible in the base ring, little is known about this module structure. We study the cofixed space in the case that $G$ is the symmetric group on $n$ letters acting on a polynomial ring by permuting its variables. When $\mathbf{k}$ has characteristic 0, the cofixed space is isomorphic to an ideal of the ring of symmetric polynomials in $n$ variables. Localizing $\mathbf{k}$ at a prime integer $p$ while letting $n$ vary reveals striking behavior in these ideals. As $n$ grows, the ideals stay stable in a sense, then jump in complexity each time $n$ reaches a multiple of $p$. 
\end{abstract}

\maketitle


\section{Introduction}

Fix a commutative ring $\kk$ with unit. Given a representation $U$ of a finite group $G$ over $\kk$, there are two natural $\kk G$-modules one can associate to $U$ on which $G$ acts trivially - the fixed space $U^G$ and the cofixed space $U_G$. The fixed space is the largest $\kk$-submodule of $U$ carrying trivial $G$-action, while the cofixed space is the largest $\kk$-module quotient of $U$ carrying trivial $G$-action. As $\kk$-modules, the fixed space and the cofixed space are nearly dual to each other, with $(U_G)^*\cong (U^*)^G$\cite[Lemma 2.21]{AguiarMahajan}. The functors $(-)^G$ and $(-)_G$ on $\kk G$-modules form an adjoint pair.

When $U$ is a $\kk$-algebra $S$ and $G$ acts on $S$ by $\kk$-algebra automorphisms, an asymmetry between $S^G$ and $S_G$ is apparent. The fixed space $S^G$ is itself a ring, called the \textit{ring of invariants}, and its algebraic structure has been a central object of study in commutative algebra and representation theory for many years. The cofixed space, on the other hand, is not a ring, but it is a module over the ring of invariants.

The structure of the cofixed space as a module over $S^G$ depends greatly on whether or not $|G|$ is invertible in $\kk$. In the nonmodular case, i.e. when $|G|$ is a unit in $\kk$, the cofixed space is a free $S^G$-module of rank one. When $|G|$ is not a unit, very little is known about $S_G$ as an $S^G$-module. In \cite{LewisReinerStanton}, Lewis, Reiner, and Stanton prove that $S_G$ still has rank one over $S^G$, and they give conjectures for the Hilbert series of $\F_q[x_1,\ldots,x_n]_G$ when $G$ is $\GL_n(\F_q)$ or one of its parabolic subgroups.

In this paper, we study the $S^G$-module structure of $S_G$ when $S = \kk[x_1,\ldots,x_n]$, $G = \fS_n$, and $\kk = \Z_{(p)}$ or $\kk = \Z/p\Z$. Here, $\Z_{(p)}$ denotes the localization of $\Z$ at the prime ideal $(p)$. The action of $\fS_n$ is modular for these $\kk$ when $n \geq p$. It is well-known that regardless of $\kk$, the ring of $\fS_n$-invariants is a polynomial ring $\kk[e_1,\ldots,e_n]$, where $e_i$ is the degree $i$ elementary symmetric polynomial in $x_1,\ldots,x_n$. Our main theorem explicitly describes the structure of $S_G$ as a module over this polynomial ring when $p\leq n < 2p$. 

\begin{restatable}[Main Theorem]{thm}{mainthm}
\label{thm: main theorem intro}
    Let $p \leq n < 2p$ and let $i=n-p$. Then the cofixed space $\Z_{(p)}[x_1,\ldots,x_n]_{\fS_n}$ is isomorphic to the ideal $I^{\fS_n}$ of $\Z_{(p)}[e_1,\ldots,e_n]$ given by
    $$I^{\fS_n} = \langle p, \,\,e_1e_p - e_{p+1},\,\, e_2e_p-e_{p+2},\,\,\ldots,\,\,e_ie_p-e_{p+i},\,\,e_{i+1},\,\,\ldots,\,\,e_{p-1}\rangle.$$
    For $n$ in this range, the generators of $I^{\fS_n}$ form a regular sequence and have degrees $0,1,\ldots,p-1 \mod p$.
\end{restatable}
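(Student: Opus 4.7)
The plan is to realize $S_{\fS_n}$ as the image $I^{\fS_n}$ of the trace map $\Tr\colon S\to S^{\fS_n}$, $f\mapsto\sum_{\sigma\in\fS_n}\sigma f$, and then identify this image with the explicit ideal in the statement. Since $\Tr$ vanishes on $\langle\sigma f-f\rangle$, it factors through a canonical surjection $\bar\Tr\colon S_{\fS_n}\twoheadrightarrow I^{\fS_n}$, so the proof breaks into (i) showing $\bar\Tr$ is injective, (ii) exhibiting the named generators of $I^{\fS_n}$, and (iii) verifying the regularity and degree claims.

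For (i) and the first half of (ii), I would analyze $\Tr$ on monomials: $\Tr(x^\alpha)=|\mathrm{Stab}_{\fS_n}(x^\alpha)|\cdot m_\lambda$, where $\lambda$ is the shape of $\alpha$ and $|\mathrm{Stab}(x^\alpha)|=\prod_{k\ge 0}m_k!$ with $m_k$ the multiplicity of $k$ in $\alpha$ (so $m_0$ counts missing variables). The hypothesis $n<2p$ forces at most one multiplicity to lie in $[p,2p)$, hence $v_p(|\mathrm{Stab}|)\in\{0,1\}$. Consequently $I^{\fS_n}=\bigoplus_\lambda c_\lambda\,\Z_{(p)}\,m_\lambda$ as a $\Z_{(p)}$-module, where $c_\lambda=|\mathrm{Stab}|$ is a unit precisely when $\ell(\lambda)\ge i+1$ and every part-multiplicity of $\lambda$ is $<p$, and $c_\lambda=p\cdot(\text{unit})$ otherwise; in particular $p\in I^{\fS_n}$ arises from $\Tr(1)=n!$. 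Degree by degree, both $(I^{\fS_n})_d$ and $(S_{\fS_n})_d$ are then free $\Z_{(p)}$-modules of rank equal to the number of partitions of $d$ with at most $n$ parts---the former from this decomposition, the latter by orbit-counting monomials of degree $d$. A degree-preserving surjection between free $\Z_{(p)}$-modules of equal rank in each degree is automatically an isomorphism, so $\bar\Tr$ is an isomorphism; this nicely sidesteps the usual obstacle of proving trace injectivity in modular characteristic.

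For the remainder of (ii), the elementary symmetric polynomial $e_k=m_{(1^k)}$ with $i+1\le k\le p-1$ has unit $c_\lambda$ and so lies in $I^{\fS_n}$ directly. For $1\le k\le i$, the expansion
\[
e_ke_p=\sum_{r=0}^{k}\binom{p+k-2r}{k-r}\,m_{(2^r,\,1^{p+k-2r})}
\]
combined with Lucas's theorem---which gives $\binom{p+k}{k}\equiv 1\pmod p$ and $\binom{p+k-2r}{k-r}\equiv 0\pmod p$ for $1\le r\le k/2$---writes $e_ke_p-e_{p+k}$ as a $\Z_{(p)}$-combination of $p$ and monomial symmetric polynomials with unit $c_\lambda$, placing it in $I^{\fS_n}$. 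The main obstacle I anticipate is the reverse containment: showing every $m_\lambda$ with unit $c_\lambda$ lies in the ideal $J$ generated by the listed elements. My plan is to induct on $|\lambda|$, using the congruences $e_{i+1}\equiv\cdots\equiv e_{p-1}\equiv 0\pmod J$ and $e_{p+k}\equiv e_ke_p\pmod J$ for $1\le k\le i$ to expand $m_\lambda$ in elementary symmetric polynomials and reduce each resulting term to $m_\mu$ of strictly smaller size, closing at the base case.

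Finally, for (iii), reducing mod $p$ the remaining generators $e_{i+1},\ldots,e_{p-1}$ and $e_ke_p-e_{p+k}$ for $1\le k\le i$ successively eliminate $e_{i+1},\ldots,e_{p-1}$ and then $e_{p+1},\ldots,e_{p+i}$ from $\F_p[e_1,\ldots,e_n]$, yielding the polynomial quotient ring $\F_p[e_1,\ldots,e_i,e_p,e_{p+i+1},\ldots,e_n]$ of Krull dimension $n-p+1$. The mod-$p$ ideal therefore has height $p-1$, matching the number of its generators, so by Cohen--Macaulayness of $\F_p[e_1,\ldots,e_n]$ those $p-1$ elements form a regular sequence; prepending $p$, which is regular in $\Z_{(p)}[e_1,\ldots,e_n]$, gives the full regular sequence of length $p$. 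The degree-mod-$p$ assertion is immediate from inspecting each generator.
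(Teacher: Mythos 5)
Your treatment of the forward containment $J_n \subseteq I^{\fS_n}$ is essentially correct and, via Lucas's theorem, arguably cleaner than the paper's explicit $p$-adic denominator computation in Proposition~\ref{prop: I contained in Im(Tr)}: separating $1\leq r\leq k/2$ (where the binomial coefficient vanishes mod $p$, so the term is absorbed by $p\in J_n$) from $k/2 < r\leq k$ (where you correctly observe that the stabilizer order $c_{\lambda^{(r)}}$ is a unit) works. The rank-counting reformulation of injectivity of the transfer is valid, though the paper's Proposition~\ref{prop: cofixed space is image of transfer} does this more directly. Your height argument for the regular-sequence claim is also fine and matches what is implicit in the paper.

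The gap is the reverse containment $I^{\fS_n}\subseteq J_n$, which you flag as the main obstacle but only sketch. ``Expanding $m_\lambda$ in elementary symmetric polynomials and reducing each resulting term to $m_\mu$ of strictly smaller size'' is not a well-defined reduction: after applying your congruences you obtain an element of $\Z_{(p)}[e_1,\ldots,e_i,e_p]/(p)$, and the claim that this vanishes is precisely what needs proving. The natural mechanism --- peeling off the bottom row of $\lambda$ by multiplying $\transfer{}(x^{\widetilde\lambda})$ by $e_{\ell(\lambda)}$ and using dominance order, which is exactly how Proposition~\ref{prop: n=p} handles the $n=p$ base case --- is unavailable once $n>p$, because $e_{\ell(\lambda)}$ need not lie in $J_n$: partitions with $c_\lambda$ a unit and $\ell(\lambda)\geq p$ exist as soon as $i\geq 1$ (e.g.\ $\lambda=(2,1^{p-1},0^{i})$ is special, has unit stabilizer order $(p-1)!\,i!$, and has $\ell(\lambda)=p$). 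The paper's actual route is fundamentally different: it proves the base case $n=p$ to get $\hgt(I^{\fS_p})=p$, transports this height to all $p\leq n<2p$ using the stability of $p$-Sylow subgroups of $\fS_n$ (Lemma~\ref{lem: p-Sylows of Sn jump} through Corollary~\ref{cor: height of p-sylow}) together with the Shank--Wehlau result on heights of transfer ideals (Corollary~\ref{cor: height of p Sylow equals height of group}), and then sandwiches $J_n\subseteq I^{\fS_n}\subseteq\qq_n$ between two height-$p$ primes to force equality. Your plan, if completed, would give a more elementary argument that avoids the $p$-Sylow machinery entirely, but the inductive mechanism is not carried out, and as sketched it does not close.
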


The ideal $I^{\fS_n}$ is exactly the image of the transfer map $\transfer{\fS_n}:S \to S^{\fS_n}$, which sends a polynomial $f\in S$ to the sum $\sum_{\sigma\in \fS_n}\sigma(f)$. The image of this map for arbitrary $G$ has been a longstanding object of interest in the study of modular invariant theory; see \cite{NeuselSmithBook} for background, and \cite{CampbellHughesShankWehlau},\cite{Neusel1},\cite{neusel2},\cite{ShankWehlau} for work on the image of the transfer map for various subgroups $G$ of $\GL_n(\kk)$. This paper also serves to give a description of the image of the transfer map when $p\leq n < 2p$, $G=\fS_n$, and $\kk = \Z_{(p)}, \Z/p\Z$.

To illustrate Theorem \ref{thm: main theorem intro}, we write the ideals $I^{\fS_n}$ of $\Z_{(5)}[e_1,\ldots,e_n]$ for $p=5$ and $n\in \{5,6,7,8,9\}$:
\begin{align*}
    I^{\fS_5} &= \langle 5,\, {\color{red}e_1},\,e_2,\,e_3,\,e_4\rangle \\
    I^{\fS_6} &= \langle 5,\, {\color{blue}e_1e_5-e_6},\,{\color{red}e_2},\,e_3,\,e_4\rangle \\
    I^{\fS_7} &= \langle 5,\, {\color{blue}e_1e_5-e_6},\,{\color{blue}e_2e_5-e_7},\,
    {\color{red}e_3},\,e_4\rangle \\
    I^{\fS_8} &= \langle 5,\, {\color{blue}e_1e_5-e_6},\,{\color{blue}e_2e_5-e_7},\,{\color{blue}e_3e_5-e_8},\,{\color{red}e_4}\rangle \\
    I^{\fS_9} &= \langle 5,\, {\color{blue}e_1e_5-e_6},\,{\color{blue}e_2e_5-e_7},\,{\color{blue}e_3e_5-e_8},\,{\color{blue}e_4e_5-e_9}\rangle.
\end{align*}

The ideals $I^{\fS_n}$ follow the pattern that $I^{\fS_{p+j}}$ can be obtained from $I^{\fS_{p+j-1}}$ by replacing the $e_j$ generator with $e_je_p-e_{p+j}$. Moreover, the ideals are always minimally generated by a regular sequence with degrees of generators being the same mod $p$. This means that the minimal free resolutions of $I^{\fS_n}$ over $S^{\fS_n}$ have the same structure, and the graded Betti numbers\footnote{See Appendix \ref{section: appendix} for notes on why graded minimal free resolutions over the polynomial ring $\Z_{(p)}[e_1,\ldots,e_n]$ are unique up to isomorphism. This allows us to use the term ``Betti number" unambiguously.} $\beta^{S^{\fS_n}}_{i,j}(I^{\fS_n})$ are the same mod $p$. This stability in the module structure of $S_G$ is trivially true when $0\leq n < p$: since $S_G$ is free of rank 1 over $S^G$, the cofixed space has one $S^G$-generator in degree 0, and no syzygies. When $n \geq 2p$, the structure of $I^{\fS_n}$ becomes more complicated, and $\Z_{(p)}[e_1,\ldots,e_n]/I^{\fS_n}$ is no longer a complete intersection ring. However, the stability of graded Betti numbers mod $p$ persists. As an example, below we show the Betti tables for $I^{\fS_4}$ and $I^{\fS_5}$ over $\Z_{(2)}[e_1,\ldots,e_4]$ and $\Z_{(2)}[e_1,\ldots,e_5]$, respectively.
\begin{center}
\begin{minipage}[c]{0.3\textwidth}
\begin{Verbatim}[commandchars=\\\{\}]
       0 1 2 3 
total: 5 7 4 1 
    0: 1 1 . . 
    1: 1 1 1 . 
    2: 1 2 1 . 
    3: 1 1 1 1 
    4: . 1 1 . 
    5: . 1 . . 
    6: 1 . . . 
\end{Verbatim}
\end{minipage}
\begin{minipage}[c]{0.3\textwidth}
\begin{Verbatim}[commandchars=\\\{\}]
        0 1 2 3 
total:  5 7 4 1 
    0:  1 . . . 
    1:  . 1 . . 
    2:  1 1 . . 
    3:  1 . 1 . 
    4:  . 2 . . 
    5:  1 . 1 . 
    6:  . 1 1 . 
    7:  . 1 . 1 
    8:  . . 1 . 
    9:  . 1 . . 
    10: 1 . . . 
\end{Verbatim}
\end{minipage}
\end{center}

\bigskip

Just as in the case that $n < 2p$, the minimal generators have the same degrees mod $2$ (in fact, they are the same mod $4$), and the degree shifts appearing in higher homological degree are also the same mod $4$. Along with more data to this effect, these findings suggest the following conjecture.

\begin{conj}\label{conj: main conj intro}
    Let $k, m, n$ be nonnegative integers such that $kp \leq m,n < (k+1)p$. Let $I^{\fS_m}\subset \Z_{(p)}[e_1,\ldots,e_m]$ and $I^{\fS_n}\subset\Z_{(p)}[e_1,\ldots,e_n]$ be the $\fS_m$- and $\fS_n$-cofixed spaces of the polynomial ring in $m,n$ variables, respectively. Then for a fixed $i\geq 0$, the $i^{\text{th}}$ columns of the Betti tables for $I^{\fS_m}$ and $I^{\fS_m}$ have the same degree shifts mod $p$, with multiplicity.
\end{conj}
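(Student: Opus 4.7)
The plan is to use the identification $S_{\fS_n} \cong I^{\fS_n}$ (the image of the transfer map $\Tr^{\fS_n}$), reducing the theorem to computing this image and checking the regular-sequence property. A key observation enabled by $p \le n < 2p$ is that for any multi-index $\alpha$ with multiplicities $n_1, n_2, \ldots$ of its distinct entries, the stabilizer $|\mathrm{Stab}(\alpha)| = \prod_k n_k!$ has $p$-adic valuation $0$ or $1$: at most one multiplicity can be $\ge p$, and when it is, that multiplicity lies in $[p, 2p)$, so its factorial contributes exactly one factor of $p$. Since $\Tr(x^\alpha) = |\mathrm{Stab}(\alpha)| \cdot m_\alpha$ where $m_\alpha$ is the associated monomial symmetric polynomial, this tightly controls the image.

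To check each listed generator lies in $I^{\fS_n}$: the scalar $p$ comes from $\Tr(1) = n!$, which equals $p$ times a $\Z_{(p)}$-unit by Legendre's formula. For $i < j < p$, $\Tr(x_1 \cdots x_j) = j!(n-j)! \cdot e_j$ has unit coefficient, so $e_j \in I^{\fS_n}$. For $1 \le j \le i$, expanding
$$e_j e_p = \sum_{u=0}^{j} \binom{p+j-2u}{p-u}\, m_{(2^u,\,1^{p+j-2u},\,0^{n-p-j+u})}$$
recovers $e_{p+j}$ with coefficient $\binom{p+j}{p} \equiv 1 \pmod{p}$ at $u = 0$; for $u \ge 1$, either $u \le j/2$, in which case the multiplicity $p+j-2u \ge p$ of the part $1$ combined with Lucas' theorem forces $\binom{p+j-2u}{p-u} \equiv 0 \pmod{p}$, or $u > j/2$, in which case all three multiplicities $u$, $p+j-2u$, $i-j+u$ are strictly less than $p$ and the corresponding $m$-polynomial lies in the image. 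Combining these with $p \in I^{\fS_n}$ gives $e_je_p - e_{p+j} \in I^{\fS_n}$.

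For the reverse containment and the regular-sequence property, let $J$ denote the ideal from the theorem and $R = \Z_{(p)}[e_1, \ldots, e_n]$. Substituting the relations $p = 0$, $e_{i+1} = \cdots = e_{p-1} = 0$, and $e_{p+j} = e_j e_p$ presents $R/J \cong \F_p[e_1, \ldots, e_i, e_p]$, a polynomial ring of Krull dimension $i + 1 = (n+1) - p$. Since $R$ is Cohen--Macaulay of dimension $n+1$ and $J$ has exactly $p$ generators with quotient of the expected codimension, the generators form a regular sequence, and the Koszul resolution gives $R/J$ graded $\F_p$-dimensions $[(1-q^p)\prod_{k=1}^i (1-q^k)]^{-1}$. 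Both $R/J$ and $R/I^{\fS_n}$ are graded $\F_p$-algebras (as $p$ lies in both ideals), and the stabilizer-valuation bound identifies $(R/I^{\fS_n})_d$ as the $\F_p$-span of partitions $\alpha$ of degree $d$, length $\le n$, with some multiplicity $\ge p$. The main obstacle is the combinatorial identity
$$\sum_{\substack{\ell(\alpha) \le n \\ \max\mathrm{mult}(\alpha) \ge p}} q^{|\alpha|} \;=\; \frac{1}{(1-q^p)\prod_{k=1}^{i}(1-q^k)},$$
which I would prove by a bijection between such bad partitions $(a^{p+c}, \mu)$ (heavy value $a$ of multiplicity $p+c$, and $\mu$ a multiset of $i-c$ non-negative integers avoiding $a$) and tuples $(b_1, \ldots, b_i, b_p) \in \N^{i+1}$; the uniqueness of the heavy value, which requires $n < 2p$, is what makes such an encoding possible. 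With the identity in hand, the surjection $R/J \twoheadrightarrow R/I^{\fS_n}$ induced by $J \subseteq I^{\fS_n}$ becomes a graded-dimension isomorphism, forcing $J = I^{\fS_n}$. The degree claim is immediate: the generators have degrees $0,\, p+1, \ldots, p+i,\, i+1, \ldots, p-1$, which form a complete residue system modulo $p$.
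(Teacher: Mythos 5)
What you have written is essentially a proof of the Main Theorem (Theorem \ref{thm: main theorem intro}), not of Conjecture \ref{conj: main conj intro}. The conjecture is a statement about \emph{all} $k\geq 0$, but every ingredient in your argument invokes the hypothesis $p \leq n < 2p$, i.e.\ $k=1$: the $p$-adic valuation of a stabilizer order being at most $1$, the regular-sequence and Koszul structure of $J$, and the identification $R/J \cong \F_p[e_1,\ldots,e_i,e_p]$ all fail once $n\geq 2p$. In that range the paper notes explicitly that $\Z_{(p)}[e_1,\ldots,e_n]/I^{\fS_n}$ is no longer a complete intersection (see the Betti tables for $p=2$, $n=4,5$ displayed in the introduction); an exponent vector can have two values each repeated $\geq p$ times, or a single value repeated $\geq 2p$ times, so stabilizer orders are divisible by higher powers of $p$, the quotient $R/I^{\fS_n}$ is no longer a direct sum of copies of $\F_p$, and the Hilbert-series comparison has no obvious replacement. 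Moreover the paper does not itself prove Conjecture \ref{conj: main conj intro}: Section \ref{section: conjecture} offers only computational evidence for $k\geq 2$ and ends with an open question. Your argument, if completed, would establish only the $k\leq 1$ cases -- precisely what Theorem \ref{thm: main theorem intro} already delivers.

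Within the $k=1$ range your route is a genuinely different one from the paper's. For the containment $J\subseteq I^{\fS_n}$ your Lucas-theorem bookkeeping is tidier than the paper's direct analysis of the fractions $b_\lambda/a_\lambda$ in Proposition \ref{prop: I contained in Im(Tr)}. For the reverse containment you replace the paper's height argument (comparing transfer ideals of $\fS_n$ and of its $p$-Sylow subgroup via Lemma \ref{lem: p-Sylows of Sn jump} and Corollaries \ref{cor: height of p-sylow}--\ref{cor: height of p Sylow equals height of group}, and then using that $J_n$ is prime of height $p$) with a graded $\F_p$-dimension count. That is a legitimate alternative, but it hinges on the generating-function identity
\[
\sum_{\alpha} q^{|\alpha|} = \frac{1}{(1-q^p)\prod_{k=1}^{i}(1-q^k)},
\]
summed over $\fS_n$-orbits of exponent vectors $\alpha\in\N^n$ having some multiplicity $\geq p$, which you only assert together with a sketched bijection. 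Until that identity is proven, even the $k=1$ case is not a complete argument.
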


\subsection{Organization of paper}
In Section \ref{section: background}, we discuss background on cofixed spaces of finite group representations and cite general results on the $S^G$-module structure of the cofixed space. In Section \ref{section: permutation groups and transfer map}, we discuss the modular transfer map and what is known about its image. We then relate the transfer map back to the cofixed space over rings of characteristic 0 and prove a useful base change lemma (Lemma \ref{lem: base change}). Section \ref{section: main theorem proof} goes through the steps necessary to prove Theorem \ref{thm: main theorem intro}. Of particular importance is the relationship between the structure of the Sylow $p$-subgroups of $\fS_n$ and the stability in the ideals $I^{\fS_n}$; this is the topic of subsection \ref{subsection: p-Sylow}. Theorem \ref{thm: main theorem intro} is proven in subsection \ref{subsection: main proof}. We discuss applications of Theorem \ref{thm: main theorem intro} to the cofixed space and the transfer map with $\Z/p\Z$ coefficients in Section \ref{section: applications}. In Section \ref{section: conjecture}, we provide data in support of Conjecture \ref{conj: main conj intro}, which is restated more precisely as Conjecture \ref{conj: betti number}. Appendix \ref{section: appendix} is dedicated to verifying that graded minimal free resolutions over polynomial rings with local coefficient rings are unique up to isomorphism.

\section*{Acknowledgments}
The author is very grateful to Victor Reiner for introducing this project and for continued guidance during all of its stages. The author would also like to thank Lukas Katth\"an for initial work on the project, and Ayah Almousa, Eddy Campbell, Patricia Klein, Monica Lewis, Andrew O'Desky, Michael Perlman, McCleary Philbin, Mahrud Sayrafi, Gregory Smith, David Wehlau, and Jerzy Weyman for helpful references and conversations. All computations related to this paper were done in \texttt{Macaulay2} and relied on the \texttt{InvariantRing}, \texttt{LocalRings}, and \texttt{SymmetricPolynomials} packages. The author was partially supported by NSF grant DMS-2053288.

\section{Background on cofixed spaces}\label{section: background}

\subsection{Cofixed spaces of general representations}

\begin{definition}\label{def: cofixed space}
    Let $U$ be a finitely-generated free $\kk$-module and let $G$ be a finite group acting on $U$ via $\kk$-linear automorphisms of $U$. We define the \textit{fixed space} $U^G$, and the \textit{cofixed space} $U_G$, respectively, to be the $\kk$-modules
    \begin{align*}
        U^G &:= \left\{ u \in U : g(u) = u \text{ for all }g\in G\right\}, \\
        U_G &:= U / \spn_\kk\left\{ u - g(u) : u \in U, g \in G \right\}.
    \end{align*}
\end{definition}

The fixed and cofixed spaces satisfy the properties that $U^G$ is the largest submodule of $U$ on which $G$ acts trivially, and $U_G$ is the largest quotient of $U$ on which $G$ acts trivially. They can also be defined as
\begin{align*}
    U^G &= \Hom_{\kk G}(\kk, U), \quad \text{and}\\
    U_G &= \kk\otimes_{\kk G}U,
\end{align*}
where $\kk$ is the trivial $\kk G$-module. The group homology and group cohomology functors $H_i(G,-)$ and $H^i(G,-)$, respectively, are the left- and right-derived functors of $(-)_G$ and $(-)^G$. The fixed space is well-known and well-studied due to its importance in the invariant theory of finite groups, when $U$ is a ring. The cofixed space appears less often in the literature, but is sometimes used to define other objects. See, e.g. \cite[\S 3.1]{ChurchEllenbergFarb} for its role in defining the stability degree of an FI-module and \cite[Ch.15]{AguiarMahajan} for its role in defining the bosonic Fock functors. We now provide two specific examples of fixed and cofixed spaces so that the reader can gain intuition.

\begin{example}
    When $U = \kk[x_1,\ldots,x_n]$ and $G$ is a subgroup of $\fS_n$ which acts by permuting variables, then $U^G$ and $U_G$ are isomorphic free $\kk$-modules, with $\kk$-bases
    \begin{align*}
        U^G &= \spn_\kk\left\{\sum_{\sigma\in G/G_\lambda}x^\lambda:\lambda\in\Lambda\right\}, \\
        U_G &= \spn_\kk\left\{\overline{x^\lambda}:\lambda\in\Lambda\right\}
    \end{align*}
    where $\Lambda$ is a complete set of $G$-orbit representatives of monomials in $U$ and $G_\lambda$ denotes the stabilizer subgroup of the monomial $x^\lambda$. That is, the fixed space has a $\kk$-basis of orbit \textit{sums} of monomials, while the cofixed space has a $\kk$-basis of orbit \textit{representatives} of monomials. When $G = \fS_n$, the set $\Lambda$ can be taken to be all integer vectors $(\lambda_1,\ldots,\lambda_n)$ with $\lambda_1\geq \cdots \geq \lambda_n\geq 0$.
\end{example}

While it is often true that $U^G$ and $U_G$ are isomorphic as $\kk$-modules, this is not always the case, as we show in the example below.

\begin{example}
    Let $V = \F_3[x,y]$ and let $G=\GL_2(\F_3)$ act by ring automorphisms of $V$ induced from the action of $G$ on the $\F_3$-vector space with basis $x,y$. The action of $G$ preserves the standard grading on $V$. We consider $U = V_4$, the $\F_3$-span of the degree 4 elements in $V$, which is generated as an $\F_3$-vector space by all monomials of degree 4. For convenience, we list a generating set for $G$:
    
    $$A = \begin{bmatrix} -1 & 0 \\ 0 & 1 \end{bmatrix}, \quad B = \begin{bmatrix} 1 & 0 \\ 0 & -1 \end{bmatrix}, \quad C= \begin{bmatrix} 0 & 1 \\ 1 & 0 \end{bmatrix}, \quad D = \begin{bmatrix} 1 & 1 \\ 0 & 1\end{bmatrix}.$$
    
    Any element in $U^G$ must lie in the $\F_3$-span of $\{x^4, x^2y^2,y^4\}$ in order to be invariant under $A$ and $B$, and furthermore must be in the $\F_3$-span of $\{x^4+y^4,x^2y^2\}$ to be invariant under $C$. One can directly show that any $\F_3$-linear combination $a(x^4+y^4)+b(x^2y^2)$ which is invariant under $D$ must satisfy $a=b=0$. Hence, $U^G = 0$.
    
    On the other hand, $U_G$ is a 1-dimensional $\F_3$-vector space. The matrices $A$ and $B$ give the relations $2x^3y=0$ and $2xy^3=0$ in $U_G$. The matrix $C$ gives the relation $x^4 - y^4=0$, and combining with $y^4 = (x+y)^4$ gives $x^4=y^4=0$. However, all generating matrices and their inverses applied to $x^2y^2$ induce the trivial relation $x^2y^2-x^2y^2=0$ in $U_G$ after modding out by the relations involving the other monomials. We will see in Proposition \ref{prop: LRS 5.1} that it is enough to check the image of $x^2y^2$ on the generators of the group and their inverses. Hence $U_G$ is 1-dimensional, spanned over $\F_3$ by the image of $x^2y^2$.
\end{example}


\subsection{The cofixed space as a module over the ring of invariants}

When $U$ is a $\kk$-algebra $S$ and $G$ acts by $\kk$-algebra automorphisms, the invariant space $S^G$ is a subring of $S$, which we call the \textit{ring of invariants} or the \textit{invariant ring}. The multiplication action of $S^G$ on $S$ descends to the quotient $S_G$, since given $r\in S^G$, $f\in S$, and $g\in G$, we have
$$r(f-g(f)) = rf - rg(f) = rf - g(rf).$$
The cofixed space is therefore a module over the ring of invariants. The following proposition, found in \cite[Proposition 5.1]{LewisReinerStanton}, outlines some basic information on generating $S_G$ over $S^G$. From this it follows that $S_G$ is a finitely generated $S^G$-module, since $S$ is finitely generated over $S^G$ \cite[Theorem 1.3.1]{Benson}.

\begin{prop}\label{prop: LRS 5.1}
Let $M$ be a module over a $\kk$-algebra $R$, and let $G$ be a finite group acting on $M$ by $R$-module automorphisms. Write $M_G = M/N$, where $N = \spn_\kk\{m-g(m):g\in G,m\in M\}$. Suppose that $\{m_i:i\in I\}\subseteq M$ generates $M$ as an $R$-module and that $\{g_j:j\in J\}\subseteq G$ generates $G$ as a group. Then,
\begin{enumerate}
    \item[(i)] the images $\{\overline{m}_i:i\in I\}$ generate $M_G$ as an $R$-module, and
    \item[(ii)] the set $\{m_i - g_j(m_i), m_i - g_j^{-1}(m_i):i\in I, j\in J\}$ generates $N$ as an $R$-module.
\end{enumerate}
\end{prop}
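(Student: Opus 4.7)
Part (i) is essentially immediate: any $\overline{m} \in M_G$ lifts to some $m \in M$, which can be written as $m = \sum_{i} r_i m_i$ with $r_i \in R$; passing back to the quotient yields $\overline{m} = \sum_{i} r_i \overline{m}_i$. This shows the $\overline{m}_i$ generate $M_G$ as an $R$-module.

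For part (ii), first observe that $N$ is in fact an $R$-submodule of $M$: since $G$ acts by $R$-module automorphisms, $r(m - g(m)) = rm - g(rm) \in N$ for any $r \in R$. Set $T = \{m_i - g_j^{\pm 1}(m_i) : i \in I,\, j \in J\}$. The inclusion $R \cdot T \subseteq N$ is clear, and the plan is to prove the reverse inclusion by showing that every generator $m - g(m)$ of $N$ lies in $R \cdot T$. I would tame the two sources of generality here — the module element $m$ and the group element $g$ — in two separate reductions.

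For the module element: writing $m = \sum_i r_i m_i$ and using $R$-linearity of the $G$-action gives
\[
m - g(m) \;=\; \sum_i r_i \bigl(m_i - g(m_i)\bigr),
\]
so it suffices to show $m_i - g(m_i) \in R \cdot T$ for all $i \in I$ and $g \in G$. For the group element, I would induct on the length $\ell$ of a word $g = h_1 \cdots h_\ell$ with each $h_k \in \{g_j^{\pm 1} : j \in J\}$, proving the stronger statement that $m' - g(m') \in R \cdot T$ for every $m' \in M$. The cases $\ell = 0, 1$ follow from the module-element reduction above. For the inductive step, write $g = g' h_\ell$ and telescope:
\[
m' - g(m') \;=\; \bigl(m' - h_\ell(m')\bigr) + \bigl(h_\ell(m') - g'(h_\ell(m'))\bigr).
\]
The first term lies in $R \cdot T$ by the $\ell = 1$ case applied to $m'$, and the second is of the form $m'' - g'(m'')$ with $m'' = h_\ell(m') \in M$ and $g'$ a word of length $\ell - 1$, which lies in $R \cdot T$ by the inductive hypothesis.

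The argument is fundamentally clean, and no step is a real obstacle; the only subtlety is to state the inductive hypothesis in the right form. It must quantify over \emph{all} $m' \in M$ rather than merely over the generators $m_i$, so that after one telescoping step the newly produced element $h_\ell(m')$ (which need not be among the $m_i$) can again be fed into the inductive hypothesis.
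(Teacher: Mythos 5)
Your proof is correct. The paper does not actually contain a proof of this proposition — it is cited verbatim from Lewis--Reiner--Stanton, Proposition~5.1 — so there is no in-paper argument to compare against; your two reductions (pushing $R$-coefficients through the generators via $R$-equivariance of the $G$-action, then inducting on word length using the telescoping identity) are the natural argument, and you correctly identify the one real subtlety, namely that the inductive hypothesis must quantify over all $m' \in M$ rather than only the generators so that the new element $h_\ell(m')$ can be fed back in.
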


Aside from finite generation of $S_G$ over $S^G$, the rank of $S_G$ as an $S^G$-module is also known, due to Lewis, Reiner, and Stanton \cite[Proposition 5.7]{LewisReinerStanton}.

\begin{prop}\label{prop: rank one module}
    Let $S$ be a $\kk$-algebra and an integral domain on which a finite group $G$ acts by $\kk$-algebra automorphisms. Then the cofixed space $S_G$ has rank one as a module over the ring of invariants.
\end{prop}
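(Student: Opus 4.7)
The plan is to localize at the nonzero invariants so as to reduce the statement to a computation over a field, and then to invoke the Normal Basis Theorem from classical Galois theory. Set $K := \operatorname{Frac}(S^G)$ and $L := \operatorname{Frac}(S)$; these are well-defined since $S$ is a domain and $S^G \subseteq S$. By definition, the rank of the finitely generated $S^G$-module $S_G$ equals $\dim_K\!\bigl(S_G \otimes_{S^G} K\bigr)$, so it suffices to show this dimension equals one.

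The first step is to identify $S_G \otimes_{S^G} K$ with the cofixed space $L_G$. Since $S$ is finitely generated and integral over $S^G$ (this is the standard Noether-type fact cited just before Proposition \ref{prop: LRS 5.1}), every nonzero $s\in S$ divides its norm $\prod_{g\in G} g(s)\in S^G$, so inverting $S^G\setminus\{0\}$ in $S$ inverts every nonzero element of $S$; that is, $S\otimes_{S^G}K \cong L$. Because the left $S^G$-action and left $\kk G$-action on $S$ commute, tensor products can be commuted to give
$$S_G\otimes_{S^G}K \;=\; (S\otimes_{\kk G}\kk)\otimes_{S^G}K \;\cong\; L\otimes_{\kk G}\kk \;=\; L_G.$$
I would also observe that the $\kk$-subspace $\spn_\kk\{g(\ell)-\ell:g\in G,\ell\in L\}$ is automatically a $K$-subspace, since $K\subseteq L^G$ commutes with the $G$-action; so $L_G$ carries a natural $K$-vector space structure compatible with the identification above.

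Now I would reduce to a faithful action: if $N\trianglelefteq G$ is the kernel of the action on $L$, then $L_N = L$ and $L_G = L_{G/N}$, so I may replace $G$ by $G/N$ and assume $G$ acts faithfully on $L$. A short norm argument (writing $s/t = s\cdot\!\prod_{g\neq 1}g(t)/N(t)$) shows $L^G = K$, and Artin's theorem then guarantees that $L/K$ is a finite Galois extension with Galois group $G$. The Normal Basis Theorem, valid in all characteristics, yields $\alpha\in L$ such that $\{g(\alpha):g\in G\}$ is a $K$-basis for $L$; equivalently, $L\cong KG$ as $KG$-modules. Therefore
$$L_G \;\cong\; KG\otimes_{KG}K \;\cong\; K,$$
which is one-dimensional over $K$, finishing the proof.

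The main obstacle is the opening identification $S_G\otimes_{S^G}K\cong L_G$: one must check both that localizing $S$ at $S^G\setminus\{0\}$ produces all of $L$ (integrality plus the norm trick) and that localization commutes with passage to coinvariants. Once this is in place, the reduction to a faithful action and the invocation of the Normal Basis Theorem are essentially formal, so this is where I expect the real content to lie.
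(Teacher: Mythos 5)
Your proof is correct. The paper itself does not reproduce a proof of this proposition, deferring instead to Lewis--Reiner--Stanton, and their argument is essentially the one you give: localize $S^G$ at its nonzero elements, use the norm trick to identify $S\otimes_{S^G}K$ with $L=\operatorname{Frac}(S)$ and to show $L^G=K$, pass to the faithful quotient, and invoke Artin's theorem plus the Normal Basis Theorem to conclude $L_G\cong K$. Your two flagged subtleties — that localizing at $S^G\setminus\{0\}$ already produces all of $L$, and that the $\kk$-span of the coboundaries in $L$ is a $K$-subspace (so the $\kk$-cofixed and $K$-cofixed spaces of $L$ agree) — are exactly the points that need checking, and you handle both correctly. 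One small remark: finite generation of $S$ over $S^G$ is not actually needed anywhere; mere integrality suffices for the norm trick, and that holds unconditionally since every $s\in S$ is a root of the monic polynomial $\prod_{g\in G}(X-g(s))$ with coefficients in $S^G$.
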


The structure of $S_G$ over $S^G$ becomes very simple when $|G|$ is a unit in $\kk$. To see this, we use the Reynolds operator $\pi^G:S \to S^G$, defined by
$$\pi^G(f) = \frac{1}{|G|}\sum_{g\in G}g(f).$$
The Reynolds operator is a map of $S^G$-modules, and it is a projection onto the ring of invariants whenever it is defined.

\begin{prop}\label{prop: nonmodular cofixed space}
    Suppose $|G|$ is a unit in $\kk$ and that $G$ acts on a $\kk$-algebra $S$ which is an integral domain as in Proposition \ref{prop: rank one module}. Then the cofixed space $S_G$ is a free $S^G$-module of rank one.
\end{prop}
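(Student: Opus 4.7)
The plan is to use the Reynolds operator to produce an explicit isomorphism of $S^G$-modules $S_G \cong S^G$, which immediately gives that $S_G$ is free of rank one, since $S^G$ is of course free of rank one over itself. No appeal to the integral domain hypothesis is needed for this step — the domain assumption is only relevant for ``rank one'' meaning what it should mean (and this is already established in Proposition \ref{prop: rank one module}).

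First I would check that the Reynolds operator $\pi^G \colon S \to S^G$ descends to a well-defined map $\overline{\pi}^G \colon S_G \to S^G$. This is immediate from the fact that $\pi^G(f - g(f)) = \pi^G(f) - \pi^G(g(f)) = 0$, since averaging is invariant under reindexing the sum by $g$. Moreover $\overline{\pi}^G$ is a map of $S^G$-modules because $\pi^G$ already is.

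Next I would define the candidate inverse $\iota \colon S^G \to S_G$ as the composite of the inclusion $S^G \hookrightarrow S$ and the projection $S \twoheadrightarrow S_G$, which is clearly $S^G$-linear. Checking that $\overline{\pi}^G \circ \iota = \id_{S^G}$ amounts to noting that $\pi^G$ restricts to the identity on $S^G$, since each $g$ fixes an invariant element. For the other composition, given $f \in S$, the identity
$$f - \pi^G(f) \;=\; \frac{1}{|G|}\sum_{g\in G}\bigl(f - g(f)\bigr)$$
shows $f \equiv \pi^G(f) \pmod{N}$, so $\iota \circ \overline{\pi}^G = \id_{S_G}$. Both uses of the identity $\tfrac{1}{|G|}$ require exactly the hypothesis that $|G|$ is a unit in $\kk$.

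There is no real obstacle here; the only thing to be careful about is that every map in sight is genuinely $S^G$-linear (not merely $\kk$-linear), which is why one needs $\pi^G$ to be an $S^G$-module map — and this in turn rests on the fact that $G$ acts by $\kk$-algebra automorphisms, so invariants slide in and out of the averaging sum. Once the isomorphism $S_G \cong S^G$ is in place, freeness of rank one is automatic.
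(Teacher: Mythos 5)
Your proof is correct and takes the same essential route as the paper: both show that the Reynolds operator descends to an isomorphism $S_G \xrightarrow{\sim} S^G$ of $S^G$-modules. Your version is, if anything, more carefully justified than the paper's — you make explicit the averaging identity $f - \pi^G(f) = \tfrac{1}{|G|}\sum_{g\in G}\bigl(f - g(f)\bigr)$, which is what really underlies the injectivity of the descended map, whereas the paper compresses this into a quick (and somewhat loosely phrased) claim that two preimages of the same invariant must agree in $S_G$.
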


\begin{proof}
    The map $\pi^G$ factors through $S_G$ since any two elements of $S$ in the same $G$-orbit have the same image under $\pi^G$. The induced map $S_G\to S^G$ remains surjective. The map is also injective, since given $f\in S^G$, any two preimages $h, h'\in S$ of $f$ under $\pi^G$ must be in the same $G$-orbit, hence equal in $S_G$.
\end{proof}

\begin{remark}
    Aguiar and Mahajan note in \cite[Lemma 2.20]{AguiarMahajan} that $S_G$ is also a free rank one $S^G$-module when $S$ is a flat $\kk G$-module; in this case the map $|G|\pi^G$ is an isomorphism.
\end{remark}

When $|G|$ is not invertible in $\kk$ and $S$ is not flat over $\kk G$, there is very little known about the structure of $S_G$ as a module over $S^G$.

To give the reader a sense of how the $S^G$ action can be nontrivial when $|G|$ is not invertible in $\kk$, we work out an example below.

\begin{example}\label{ex: p2n3}
Let $S = \F_2[x_1,x_2,x_3]$ and let $G=\fS_3$ act by variable permutation; here $\F_2$ is the field of 2 elements. The invariant ring is a polynomial ring $\F_2[e_1,e_2,e_3]$, where $e_i$ is the degree $i$ elementary symmetric polynomial. By Proposition \ref{prop: LRS 5.1}, a generating set for $S$ over $S^{\fS_3}$ descends to a generating set for $S_{\fS_3}$ over $S^{\fS_3}$. One well-known basis for $S$ over $S^{\fS_3}$ is the set of ``sub-staircase" monomials, also known as the Artin basis \cite{Artin}; these are the monomials $x_1^{\lambda_1}x_2^{\lambda_2}x_3^{\lambda_3}$ which satisfy $0\leq \lambda_i \leq 3-i$ for all $i$. Because all monomials in the same $\fS_3$-orbit are equal in the cofixed space, it suffices to take such monomials with weakly decreasing exponent vectors. This means that the images of $\{1, x_1,x_1^2,x_1x_2,x_1^2x_2\}$ generate the cofixed space over $S^{\fS_3}$. This is not a minimal generating set; notice that
$$e_1\cdot \overline{1} = \overline{x_1} + \overline{x_2} + \overline{x_3} = 3\,\overline{x_1} = \overline{x_1},$$
so $\overline{x_1}$ is redundant. One can similarly show that $\overline{x_1x_2} = e_2\cdot\overline{1}$ and $\overline{x_1^2} = e_1^2\cdot\overline{1}$. However, $\overline{x_1^2x_2}$ is a minimal generator, as any degree three monomial in $e_1,e_2,e_3$ has an even number of terms in the $\fS_3$-orbit of $x_1^2x_2$. Hence, $\{\overline{1},\overline{x_1^2x_2}\}$ is a minimal generating set for $S_{\fS_3}$ over $S^{\fS_3}$. One can compute that $\operatorname{Ann}_{S^{\fS_3}}(\overline{1}) = \langle e_1e_2 + e_3\rangle$, and that $\overline{x_1^2x_2}$ generates a free $S^{\fS_3}$-submodule of $S_{\fS_3}$. Hence, the cofixed space has a decomposition
$$\F_2[x_1,x_2,x_3]_{\fS_3} \cong \frac{\F_2[e_1,e_2,e_3]}{\langle e_1e_2 + e_3\rangle} \oplus \langle e_1e_2+e_3\rangle$$
as a module over $\F_2[e_1,e_2,e_3]$. This decomposition will also follow from Theorem \ref{thm: F_p structure}.
\end{example}

In the next section, we focus on the case when $G$ acts on a polynomial ring $S$ by permuting variables. In this case, we can use the transfer map, a multiple of the Reynolds operator which is defined for all $\kk$, to study $S_G$ as an $S^G$-module. Significantly more is known about the image of the transfer map when $G$ is a permutation group, and this turns out to be closely related to the study of the cofixed space.

\section{The cofixed space of a permutation representation}\label{section: permutation groups and transfer map}

\subsection{The transfer map}

We specialize to the case when $S = \kk[x_1,\ldots,x_n]$ for the remainder of the paper. We give $S$ the standard grading with $\deg(x_i) = 1$ for all $i$ and consider group actions which preserve the graded structure. In other words, $G$ is a subgroup of $\GL(V)$ acting on a vector space $V\cong \kk^n$ with basis $x_1,\ldots,x_n$, and this action extends to $\operatorname{Sym}(V)\cong S$.

\begin{definition}\label{def: transfer map}
    With $G, S$ as above, define the \textit{transfer map} $\Tr_\kk^G: S \to S^G$ by
        $$\Tr_\kk^G(f) = \sum_{g\in G} g(f).$$
    Because $\Tr_\kk^G$ is a map of $S^G$-modules, its image is an ideal of $S^G$, which we denote by $I_\kk^G$. When $\kk$ is clear from context, we may drop the subscript and denote the transfer map by $\Tr^G$ and its image by $I^G$.
\end{definition}

\begin{remark}
    The transfer map descends to the quotient $S_G$. It also can be defined in greater generality for $G$ any finite group and $U$ a representation of $G$ over $\kk$. This gives a natural map $H_0(G, U) \to H^0(G,U)$, sometimes called the \textit{norm map}, and it is used in the study of Tate cohomology; see \cite[\S6.4]{Brown}.
\end{remark}

Unlike the Reynolds operator, the transfer map is defined for $\kk$ of arbitrary characteristic. When $~\operatorname{char}(\kk)=0$ but $|G|\notin \kk^\times$, it is easy to see that $I^G$ is a proper, nonzero ideal of $S^G$; namely $1\notin I^G$ but $|G|\in I^G$. When $\kk$ is a field of characteristic $p$, it is more subtle to see that $I^G$ is a nonzero, proper ideal of $S^G$; the proof in \cite[Theorem 2.2]{ShankWehlau} requires the assumption that the action of $G$ on $S$ comes from a faithful representation $G\into \GL(V)$.

The image of a single element $f$ under the transfer map is equal to $|G_f|(\sum_{f'\in G/G_f}f')$, where $G_f$ is the stabilizer of $f$. For further background on the transfer map in the context of invariant theory, we refer the reader to \cite{NeuselSmithBook} or \cite{SmithBook}.

When $G$ is a permutation group, a characteristic-free generating set for the image of the transfer map was given by Neusel in \cite{neusel2}. The generating set involves the so-called \textit{special monomials}, which we define below.

\begin{definition}\label{def: special monomials}
    Let $\alpha = (\alpha_1,\ldots,\alpha_n)\in\N^n$ be an exponent vector for a monomial $x^\alpha$ in $\kk[x_1,\ldots,x_n]$. Define $\lambda(\alpha)$ to be the weakly decreasing rearrangement of $\alpha$. We call $x^\alpha$ a \textit{special monomial} if $\lambda(\alpha)$ satisfies
    \begin{enumerate}
        \item[(i)] $\lambda(\alpha)_j \leq n-j$ for each $1\leq j \leq n$, and
        \item[(ii)] $\lambda(\alpha)_j - \lambda(\alpha)_{j+1} \in \{0,1\}$ for each $1\leq j \leq n-1$.
    \end{enumerate}
\end{definition}

\begin{thm}\label{thm: image of transfer generation}\cite[Theorem 1.1]{neusel2}
    Let $G$ be a finite group acting on $\kk[x_1,\ldots,x_n]$ by variable permutation. Then the image of the transfer map is generated by the transfers of special monomials.
\end{thm}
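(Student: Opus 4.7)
The plan is a two-step reduction: first from arbitrary monomials to the Artin basis, then from Artin monomials to special monomials via a symmetric-function identity. Since $\Tr^G$ is $S^G$-linear, $I^G$ equals the $S^G$-ideal generated by $\{\Tr^G(m) : m \text{ a monomial in } S\}$. The classical Artin basis $\{x_1^{\alpha_1}\cdots x_n^{\alpha_n} : 0\le \alpha_i \le n-i\}$ (used already in Example \ref{ex: p2n3}) freely generates $S$ as an $S^{\fS_n}$-module, and since $S^{\fS_n}\subseteq S^G$ it also generates $S$ over $S^G$. Hence $I^G$ is generated over $S^G$ by the transfers of Artin monomials, cutting the question down to a finite set of generators.

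The heart of the argument is to show that if $x^\alpha$ is an Artin monomial whose shape $\lambda(\alpha)$ is not unit-step, then $\Tr^G(x^\alpha)$ lies in the $S^G$-ideal generated by transfers of special monomials. The engine is the identity
$$e_k \cdot \Tr^G(x^\beta) \;=\; \sum_{T \subseteq [n],\ |T|=k} \Tr^G\!\bigl(x^{\beta + \mathbf{1}_T}\bigr),$$
where $\mathbf{1}_T=\sum_{i\in T}\epsilon_i$; this is immediate because $e_k\in S^G$ passes through $\Tr^G$ and $e_k\cdot x^\beta$ is the sum on the right. Given a non-special Artin $x^\alpha$ with a gap $\lambda_j-\lambda_{j+1}\ge 2$, I would write $\alpha = \beta + \epsilon_\ell$ for $\ell$ a position of a largest part of $\alpha$ and solve the $k=1$ identity for $\Tr^G(x^\alpha)$. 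The term $e_1\,\Tr^G(x^\beta)$ involves a partition $\lambda(\beta)$ strictly smaller in lexicographic order than $\lambda(\alpha)$, and the remaining terms $\Tr^G(x^{\beta+\epsilon_i})$ (for $i\ne\ell$) have partitions that, by case analysis on the row that the added box occupies, are either special or again strictly smaller than $\lambda(\alpha)$ in lex. Iterating this rewrite gives the reduction to transfers of special monomials.

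The main obstacle is making the induction close: the $k=1$ identity preserves total degree, so the only available reduction is via a well-ordering on partitions, and one must verify that \emph{every} auxiliary term produced in a rewriting step is either already a special monomial or has partition strictly below $\lambda(\alpha)$ in the chosen order. If a naive lex induction misbehaves (say, because the added box creates a new gap further up the partition), the fix is to combine the $e_k$-identity with the Cayley--Hamilton-type relation $x_i^n = e_1 x_i^{n-1} - e_2 x_i^{n-2} + \cdots \pm e_n$ to simultaneously bound individual exponents, and to induct on a product ordering whose first coordinate is the maximum exponent and whose second coordinate is lex order on $\lambda$. The extra bookkeeping is the only delicate part; once this well-ordering is set up, termination at special monomials is forced.
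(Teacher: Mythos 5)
This theorem is not proven in the paper at all: the author cites Neusel \cite[Theorem 1.1]{neusel2} and remarks only that Neusel's argument is an induction on dominance order and works over any commutative ring. Your proposal is therefore a genuine re-proof attempt, and the preliminary reduction to the Artin basis is fine: those monomials generate $S$ over $S^{\fS_n}\subseteq S^G$, and they automatically satisfy condition (i) of the special-monomial definition. The problem is the heart of the argument.

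The $k=1$ rewriting has a real gap. You write
$$\Tr^G(x^\alpha)=e_1\,\Tr^G(x^\beta)-\sum_{i\ne\ell}\Tr^G\bigl(x^{\beta+\epsilon_i}\bigr)$$
and assert that each term on the right is either special or strictly smaller in lex. This fails whenever some $\beta+\epsilon_i$ with $i\ne\ell$ lies in the same $G$-orbit as $\alpha$: then $\Tr^G(x^{\beta+\epsilon_i})=\Tr^G(x^\alpha)$ reappears on the right with the \emph{same} partition $\lambda(\alpha)$, and the recursion does not close. Equivalently, the coefficient of $\Tr^G(x^\alpha)$ in $e_1\Tr^G(x^\beta)$ is not $1$ but rather $1+\#\{i\ne\ell:\alpha_i=\alpha_\ell-1\}$ (for $G=\fS_n$), and when this is divisible by $\operatorname{char}\kk$ you cannot isolate $\Tr^G(x^\alpha)$. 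Concretely, take $G=\fS_5$, $\kk=\F_3$, $\alpha=(3,2,2,0,0)$: this is Artin and non-special ($\lambda_3-\lambda_4=2$), its stabilizer has order $4$, so $\Tr^{\fS_5}(x^\alpha)=4m_\lambda\neq 0$ over $\F_3$. With $\ell=1$ and $\beta=(2,2,2,0,0)$, the positions $1,2,3$ all send $\beta$ into the orbit of $\alpha$, and after collecting terms the identity becomes $3\,\Tr^{\fS_5}(x^\alpha)=e_1\Tr^{\fS_5}(x^\beta)-2\Tr^{\fS_5}(x^{(2,2,2,1,0)})$, which over $\F_3$ is vacuous. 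Your proposed Cayley--Hamilton patch addresses exponent growth, not this coefficient obstruction, so the gap remains. Note also that the neighboring terms $\Tr^G(x^{\beta+\epsilon_i})$ for $i\ne\ell$ need not be lex-smaller; in the example above two of them equal $\Tr^G(x^\alpha)$ itself.

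What does work is closer to what the paper's own Proposition~\ref{prop: n=p} does and to what the author attributes to Neusel: for a non-special $\lambda$, set $k$ equal to the number of nonzero parts, let $\widetilde\lambda$ be obtained by subtracting $1$ from every nonzero part, and expand $e_k\,\Tr(x^{\widetilde\lambda})$. In the example above this gives $e_3\Tr^{\fS_5}(x^{(2,1,1,0,0)})$ with the coefficient of $\Tr^{\fS_5}(x^{(3,2,2,0,0)})$ equal to $1$, and all other partitions occurring are strictly lower in \emph{dominance} order (not lex), which is the well-ordering Neusel inducts on. Making the coefficient a unit in all cases requires a case analysis on which index first violates specialness, and this is the content you are missing.
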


\begin{remark}
    Neusel proves Theorem \ref{thm: image of transfer generation} in the case that $\kk$ is a field of arbitrary characteristic using an induction on dominance order on partitions. This same argument works for any commutative ring $\kk$, hence is stated in this generality here.
\end{remark}

\begin{example}
    The special monomials of $\kk[x_1,x_2,x_3]$ are 
    $$1,\, x_1, \,x_2,\, x_3, \,x_1x_2,\, x_1x_3,\, x_2x_3,\, x_1^2x_2,\, x_1^2x_3, \,x_1x_2^2,\, x_2^2x_3,\, x_1x_3^2, \,x_2x_3^2.$$
\end{example}

When $x^\alpha$ and $x^\beta$ are in the same $G$-orbit, then $\Tr^G(x^\alpha) = \Tr^G(x^\beta)$. Hence when $G = \fS_n$, the image of $\Tr^{\fS_n}$ is generated by transfers of special monomials with weakly decreasing exponent vector, which we sometimes call a \textit{special partition}. When working over $\fS_n$, a special monomial will refer to one with a weakly decreasing exponent vector.

A different generating set for the image of the transfer when $\kk$ has positive characteristic was given by Campbell, Hughes, Shank, and Wehlau in \cite[Theorem 9.18]{CampbellHughesShankWehlau}, using the theory of block bases. This generating set also consists of transfers of certain monomials, and in general is not a minimal generating set. We will see in Section \ref{section: brute force containment} that the minimal generating set of Theorem \ref{thm: main theorem intro} is not given by transfers of monomials.

\subsection{Relating the cofixed space to the transfer map}
\begin{prop}\label{prop: cofixed space is image of transfer}
    Assume that $\kk$ is a commutative ring of characteristic zero. Let $G$ be a finite group acting on $S = \kk[x_1,\ldots,x_n]$ via a permutation representation. Then the cofixed space is isomorphic to the image of the transfer map as an ideal of $S^G$.
\end{prop}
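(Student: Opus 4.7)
The plan is to show that the transfer map $\Tr^G \colon S \to S^G$ factors through the cofixed quotient $S_G$ and then to prove that the induced $S^G$-linear map $\overline{\Tr^G} \colon S_G \to I^G$ is an isomorphism. The factorization is immediate: for any $f \in S$ and $g \in G$, reindexing the sum by $h \mapsto hg$ gives $\Tr^G(g(f)) = \sum_{h \in G} h(g(f)) = \sum_{h \in G} h(f) = \Tr^G(f)$, so $\Tr^G$ kills every generator of the submodule of relations $N = \spn_\kk\{f - g(f) : f \in S,\ g \in G\}$. The induced map $\overline{\Tr^G} \colon S_G \to S^G$ has image $I^G$ by definition, so surjectivity onto $I^G$ is automatic; the whole content is injectivity.

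To handle injectivity, I would exploit the fact that $G$ permutes the monomial $\kk$-basis of $S$, yielding a decomposition of $\kk G$-modules $S = \bigoplus_O \kk O$ indexed by $G$-orbits $O$ of monomials, where $\kk O$ is the free $\kk$-module on $O$. Cofixing commutes with direct sums, so $S_G = \bigoplus_O (\kk O)_G$. Transitivity of $G$ on $O$ identifies all monomials of $O$ in the quotient, making $(\kk O)_G$ a free $\kk$-module of rank one, generated by the class of any $v \in O$. The restriction of $\overline{\Tr^G}$ to $(\kk O)_G$ sends this generator to $\Tr^G(v) = |G_v|\,\sigma_O$, where $G_v$ is the stabilizer of $v$ and $\sigma_O = \sum_{w \in O} w$ is the orbit sum (by orbit-stabilizer, each $w \in O$ appears $|G_v|$ times in $\sum_{h \in G} h(v)$).

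Injectivity then follows from two observations. First, the orbit sums $\sigma_O$ for distinct orbits involve disjoint subsets of the monomial basis of $S$, so they are $\kk$-linearly independent in $S^G$; hence the images of the summands $(\kk O)_G$ under $\overline{\Tr^G}$ lie in an internal direct sum. Second, on each summand the map $\kk \cong (\kk O)_G \to S^G$ is multiplication by $|G_v|\sigma_O$, and this is injective because $|G_v|$ is a positive integer and hence a nonzerodivisor in $\kk$ under the characteristic-zero hypothesis, while multiplication by $\sigma_O$ is injective on $\kk$ since $\sigma_O$ is a $\kk$-linear combination of distinct basis monomials with unit coefficients. Assembling over all orbits yields the desired isomorphism $S_G \xrightarrow{\sim} I^G$.

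The only nontrivial point where the hypothesis on $\kk$ is used is the passage from $|G_v|$ being nonzero to being a nonzerodivisor; for the applications in the paper ($\kk = \Z_{(p)}$, an integral domain) this is automatic. Beyond that, the argument is essentially bookkeeping with the orbit decomposition of the permutation representation, and I do not anticipate any serious obstacle.
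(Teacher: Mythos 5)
Your proof is correct and follows essentially the same route as the paper's: both identify the induced map on $S_G$ with $\overline{x^\lambda}\mapsto|G_\lambda|m_\lambda$ relative to the monomial-orbit bases and deduce injectivity from characteristic zero, with your orbit-by-orbit decomposition of $S$ into $\kk G$-summands $\kk O$ being just a repackaging of the same bookkeeping. Your explicit remark that one needs $|G_v|$ to be a \emph{nonzerodivisor} (not merely nonzero) is a fair observation --- the paper's phrasing elides this --- but it does not change the substance of the argument.
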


\begin{proof}
    We show that the transfer map $\Tr^G:S_G \to S^G$ is injective. Since $S$ has a $\kk$-basis of monomials, the ring of invariants has a $\kk$-basis of orbit sums of monomials, i.e. a basis of the form
    $$\left\{ m_\lambda := \sum_{x^\alpha \in Gx^\lambda}x^\alpha : \lambda \in \Lambda\right\},$$
    where $\Lambda$ is a complete, irredundant set of $G$-orbit representatives of the set of monomials in $S$, and $Gx^\lambda$ denotes the set of all elements in the $G$-orbit of $x^\lambda$. On the other hand, the cofixed space is a free $\kk$-module with  $\kk$-basis $\{\overline{x^\lambda} : \lambda \in \Lambda\}$, where $\overline{f}$ denotes the image of $f$ in $S_G$. The transfer map sends $\overline{x^\lambda}$ to $|G_{\lambda}|m_\lambda$, where $G_{\lambda}$ is the stabilizer of $x^\lambda$. The image of $\overline{x^\lambda}$ under the transfer map is not zero since $\kk$ has characteristic zero. Hence, the images of the $\overline{x^\lambda}$ are $\kk$-linearly independent in $S^G$, from which it follows that the transfer map $S_G\to S^G$ is injective.
\end{proof}

The transfer map is a very useful tool to study the cofixed space when $\operatorname{char}(\kk)=0$. However, we are also interested in the case that $\kk$ has positive characteristic. In Lemma \ref{lem: base change} below, we show that taking the cofixed space commutes with base change. This allows us to work over $\kk=\Z$ (or $\kk = \Z_{(p)}$) before changing our coefficient ring to, e.g., $\kk = \Z/p\Z$. The next lemma 
was observed and proven by Katth\"an \cite{katthan}. For notes on base change, see \cite[10.14]{StacksProject}. The reader should keep in mind that we intend to apply the following to the case when $k = \Z$ or $k = \Z_{(p)}$, $R = k[e_1,\ldots,e_n]$, $M = k[x_1,\ldots,x_n]$, and $K = \Z/p\Z$.

\begin{lemma}[Base Change Lemma]\label{lem: base change}
    Let $\varphi:k \to R$, $\psi:k\to K$ be homomorphisms of commutative rings. We view $\psi:k\to K$ as the base change map. Let $M$ be a left $kG$-module and an $R$-module such that the actions of $R$ and $kG$ commute. 
    Since $k$ is commutative, view $M$ as an $(kG,k)$-bimodule. Then $$(k\otimes_{kG} M) \otimes_k K\cong k\otimes_{kG} (M\otimes_k K)$$ as $R\otimes_k K$-modules; in other words, $M_G\otimes_k K \cong (M\otimes_k K)_G$ as $R\otimes_k K$-modules.
\end{lemma}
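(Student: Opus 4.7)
The plan is to deduce the isomorphism from associativity of tensor products. Recalling that $M_G = k \otimes_{kG} M$ when $k$ is viewed as a right $kG$-module through the augmentation $g \mapsto 1$, the lemma reduces to the standard associativity isomorphism. By hypothesis $M$ is a $(kG, k)$-bimodule (since $k$ is commutative and central in $kG$, and its action on $M$ commutes with that of $kG$), and $K$ is a left $k$-module via $\psi$. The associativity of the tensor product then yields a canonical abelian-group isomorphism
\[
(k \otimes_{kG} M) \otimes_k K \;\xrightarrow{\sim}\; k \otimes_{kG} (M \otimes_k K),\qquad (1\otimes m) \otimes \kappa \;\longmapsto\; 1 \otimes (m \otimes \kappa).
\]

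First I would verify that the right-hand side is genuinely the cofixed space $(M \otimes_k K)_G$. Since the $k$-action and the $G$-action on $M$ commute, the rule $g \cdot (m \otimes \kappa) := g(m) \otimes \kappa$ gives a well-defined left $kG$-module structure on $M \otimes_k K$, and tensoring with the trivial right $kG$-module $k$ then recovers exactly the coinvariants quotient of this module. Next I would check that the associativity map above is $R \otimes_k K$-linear. The $R$-action on $M$ commutes with both the $k$-action and the $kG$-action, so it descends to $M_G$ and extends to $M \otimes_k K$, acting on the $m$-factor in each case; meanwhile $K$ acts by multiplication on the $\kappa$-factor throughout. These two actions commute on each side, fit into compatible $R \otimes_k K$-module structures, and are preserved by the map on elementary tensors by inspection.

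The only real work is the bookkeeping needed to confirm that the three commuting actions on $M$ — by $R$, by $k$, and by $kG$ — remain compatible after each tensor step, but since $k$ is central in both $R$ (via $\varphi$) and $kG$ no incompatibility can arise. I expect no serious obstacle: the substantive content of the lemma is simply that forming $G$-coinvariants is itself a tensor product construction over $kG$, and hence commutes automatically with any further base change of the coefficient ring.
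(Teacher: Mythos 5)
Your proposal is correct and takes essentially the same approach as the paper: both proofs invoke associativity of the tensor product to obtain the canonical isomorphism and then verify on elementary tensors that the $R\otimes_k K$-module structure is preserved. The paper writes out the bilinear computation explicitly where you defer it to inspection, and you add a helpful remark on why $M\otimes_k K$ inherits a well-defined $kG$-action, but the substance is identical.
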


\begin{proof}
    By associativity of tensor product, there is a natural map $a:(k\otimes_{kG}M)\otimes_k K\to k\otimes_{kG}(M\otimes_k K)$ which is well-defined and is an isomorphism of both $k$-modules and $kG$-modules \cite[10.12]{StacksProject}. It remains to check that this map preserves the $(R\otimes_k K)$-module structure on the source and the target. We show how an element $r\otimes y$ of $R\otimes_k K$ acts on basic tensors, where $x_0\in k$, $m_0\in M$, $y_0\in K$:
    \begin{align*}
        (r\otimes y) \cdot ((x_0 \otimes m_0)\otimes y_0)
        &:= r(x_0\otimes m_0) \otimes yy_0 \\
        &= (x_0 \otimes rm_0) \otimes yy_0, 
        \\
        (r\otimes y) \cdot (x_0 \otimes(m_0\otimes y_0)) &:= x_0 \otimes\left((r\otimes y)\cdot (m_0\otimes y_0) \right)\\
        &=x_0 \otimes(rm_0 \otimes yy_0).
    \end{align*}
    Hence the natural map $a$ is also a map of $S\otimes_R R'$-modules, as claimed.
\end{proof}

\begin{remark}
In the setting of Lemma \ref{lem: base change}, we would like to let $R$ be the ring of $G$-invariants inside $\kk[x_1,\ldots,x_n]$ for some group $G$. In general, it is not true that $(R\otimes_\kk K)^G = R^G\otimes_\kk K$ for a base change ring $K$, and Lemma \ref{lem: base change} would not give information on the structure of $(M\otimes_\kk K)_G$ as a module over $K[x_1,\ldots,x_n]^G$. In the case of $G=\fS_n$ with its standard action on the polynomial ring, however, the ring of invariants has the same presentation over any base ring.
\end{remark}

Lemma \ref{lem: base change} shows that we can understand $M=\F_p[x_1,\ldots,x_n]_{\fS_n}$ as a module over $\F_p[e_1,\ldots,e_n]$ by first computing $I^{\fS_n}_\Z\subset \Z[e_1,\ldots,e_n]$ and then taking $I^{\fS_n}_\Z/pI^{\fS_n}_\Z\cong I^{\fS_n}_\Z\otimes_\Z \Z/p\Z$. The ideals $I^{\fS_n}_\Z$ can be very complicated, as the stabilizers of monomials $x^\lambda$ become large. Instead, we work over $\kk=\Z_{(p)}$; in this case, the ideals $I^{\fS_n}_{\Z_{(p)}}$ become much simpler (and more interesting, as Theorem \ref{thm: main theorem intro} and Conjecture \ref{conj: main conj intro} demonstrate),
 while we can deduce information about $M$ in the same way.

\section{Proof of Main Theorem}\label{section: main theorem proof}

We specialize further to the case when $S = \kk[x_1,\ldots,x_n]$ and $G = \fS_n$. Here, the ring of invariants $S^G$ is a polynomial algebra $\kk[e_1,\ldots,e_n]$, where $e_i$ denotes the $i^{\text{th}}$ elementary symmetric polynomial in the variables $x_1,\ldots,x_n$. For each $n=p+i$ with $0\leq i \leq p-1$, we fix the ideal $J_n$ to be
$$J_n = \langle p, \,\, e_1e_p - e_{p+1},\,\,e_2e_p-e_{p+2},\,\,\ldots,\,\, e_ie_p-e_{p+i},\,\, e_{i+1},\,\,e_{i+2},\,\,\ldots,\,\, e_{p-1}\rangle\subset \Z_{(p)}[e_1,\ldots,e_n].$$
To prove Theorem \ref{thm: main theorem intro}, we will first show that there is a containment $J_n\subseteq I^{\fS_n}$ and that the theorem holds when $n=p$. We will then use a stability present in the $p$-Sylow subgroups of $\fS_n$, along with a result of Shank and Wehlau \cite{ShankWehlau} to deduce the height of $I^{\fS_n}$, which will be key in proving the theorem. Throughout this section, set $\Tr = \Tr^{\fS_n} = \Tr^{\fS_n}_{\Z_{(p)}}$, unless otherwise specified.

\begin{remark}
One can readily see via the Jacobi--Trudi identities that each generator $e_pe_j-e_{p+j}$ of the ideal $J_n$ is the skew-Schur polynomial associated to the two-column ribbon shape, with the first column having $j$ boxes and the second column having $p$ boxes. For example, the generator $e_2e_3-e_5$ is the Schur polynomial for the skew shape
$
\ytableausetup{smalltableaux}
\begin{ytableau}
\none & \\ 
\none & \\
 & \\
 & \none
\end{ytableau}.$
\end{remark}

\subsection{Containment of $J_n$ in $\imtransfer{\fS_n}$}\label{section: brute force containment}

\begin{prop}\label{prop: I contained in Im(Tr)}
Given $n = p+i$ with $1 \leq i \leq p-1$, the ideal $J_n$ of Theorem \ref{thm: main theorem intro} is contained in $\imtransfer{\fS_n}$.
\end{prop}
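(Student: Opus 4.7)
The plan is to verify individually that each generator of $J_n$ lies in $\imtransfer{\fS_n}$. The recurring tool is that for a monomial $x^{\alpha}$ with weakly decreasing exponent vector $\lambda$, one has $\transfer{\fS_n}(x^{\alpha}) = |\operatorname{Stab}_{\fS_n}(x^{\alpha})| \cdot m_\lambda$, where $m_\lambda$ is the associated monomial symmetric polynomial. Two of the three families of generators are handled directly. For $p$: compute $\transfer{\fS_n}(1) = n!$, and since $p \leq n < 2p$ the integer $p$ is the unique multiple of $p$ in $\{1,\ldots,n\}$, so $n! = pu$ with $u \in \Z_{(p)}^{\times}$, giving $p \in \imtransfer{\fS_n}$. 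For $e_j$ with $i+1 \leq j \leq p-1$: the stabilizer of the special monomial $x_1\cdots x_j$ is $\fS_j \times \fS_{n-j}$ of order $j!(n-j)!$, and since both $j$ and $n-j = p+i-j \leq p-1$ are strictly less than $p$, this order is a unit in $\Z_{(p)}$, so $e_j = (j!(n-j)!)^{-1}\transfer{\fS_n}(x_1\cdots x_j) \in \imtransfer{\fS_n}$.

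The main work is showing $e_p e_j - e_{p+j} \in \imtransfer{\fS_n}$ for $1 \leq j \leq i$. The plan is to expand $e_p e_j$ in the monomial symmetric basis as
$$e_p e_j \;=\; \sum_{a=0}^{j} \binom{p+j-2a}{p-a}\, m_{(2^a,\, 1^{p+j-2a})},$$
where the coefficient counts ways to split the given monomial as an $e_p$-factor times an $e_j$-factor overlapping in exactly $a$ variables. Since $e_{p+j} = m_{(1^{p+j})}$, subtracting yields
$$e_p e_j - e_{p+j} \;=\; \left(\binom{p+j}{p}-1\right) e_{p+j} \;+\; \sum_{a=1}^{j} \binom{p+j-2a}{p-a}\, m_{(2^a,\, 1^{p+j-2a})}.$$
Lucas' theorem gives $\binom{p+j}{p} \equiv 1 \pmod p$, so the first summand is divisible by $p$ and lies in $\imtransfer{\fS_n}$ by the previous paragraph.

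For each remaining summand indexed by $1 \leq a \leq j$, the argument splits on the sign of $j - 2a$. If $a > j/2$, the stabilizer of $x_1^2\cdots x_a^2\, x_{a+1}\cdots x_{p+j-a}$ in $\fS_n$ is $\fS_a \times \fS_{p+j-2a} \times \fS_{i-j+a}$ of order $a!\,(p+j-2a)!\,(i-j+a)!$; each of $a \leq j$, $p+j-2a$, and $i-j+a \leq i$ is strictly less than $p$, so this order is a unit in $\Z_{(p)}$ and $m_{(2^a, 1^{p+j-2a})}$ itself lies in $\imtransfer{\fS_n}$. If $a \leq j/2$ instead, the base-$p$ expansions read $p+j-2a = 1\cdot p + (j-2a)$ and $p-a = 0\cdot p + (p-a)$, with the degenerate case $a = j/2$ giving $\binom{p}{j/2}$, which is divisible by $p$ for $0 < j/2 < p$; since $j - 2a < p - a$, Lucas' theorem forces $\binom{p+j-2a}{p-a} \equiv 0 \pmod p$, so the term lies in $p\,\Z_{(p)}[e_1,\ldots,e_n] \subseteq \imtransfer{\fS_n}$. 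The principal obstacle is the coordination of these two cases: the stabilizer fails to be a unit exactly when $p+j-2a \geq p$, i.e., when $a \leq j/2$, and this is precisely the range in which Lucas' theorem makes the binomial coefficient vanish modulo $p$. Summing, every term on the right lies in $\imtransfer{\fS_n}$, completing the proof.
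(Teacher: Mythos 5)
Your proof is correct, and its skeleton matches the paper's (generator by generator; transfer of $1$ for $p$, transfer of squarefree monomials for the $e_j$ with $i+1\le j\le p-1$, and expansion of $e_pe_j$ in the monomial symmetric basis for the third family). Where you diverge is in how the coefficients on the $m_{(2^a,1^{p+j-2a})}$ are dispatched. The paper writes $e_je_p-e_{p+j}=\sum_\lambda (b_\lambda/a_\lambda)\Tr_{\mathbb{Q}}(x^\lambda)$ and verifies $b_\lambda/a_\lambda\in\Z_{(p)}$ by explicit factorial cancellation, with a separate and slightly delicate computation for the $k=0$ term. You instead split on whether $p+j-2a<p$ (equivalently $a>j/2$): in that range the stabilizer order $a!\,(p+j-2a)!\,(i-j+a)!$ is a unit in $\Z_{(p)}$, so $m_{(2^a,1^{p+j-2a})}$ already lies in $\imtransfer{\fS_n}$; in the complementary range Lucas' theorem gives $p\mid\binom{p+j-2a}{p-a}$, so that summand falls into $p\,\Z_{(p)}[e_1,\ldots,e_n]\subseteq\imtransfer{\fS_n}$, and the $a=0$ contribution is absorbed the same way via $\binom{p+j}{p}\equiv 1\pmod p$. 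This dichotomy is clean and makes the mechanism visible — a factor of $p$ enters the stabilizer order at exactly the threshold $p+j-2a\ge p$, which is also the threshold at which a base-$p$ carry forces the binomial coefficient to vanish modulo $p$ — and it bypasses the paper's factorial manipulation at $k=0$. The tradeoff is that you prove membership ideal-theoretically rather than exhibiting $e_je_p-e_{p+j}$ as an explicit $\Z_{(p)}$-linear combination of transfers of monomials; both settle the containment $J_n\subseteq\imtransfer{\fS_n}$.
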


\begin{proof}
    We separate the generators of $J_n$ into three types and show that each type of generator lies in $\imtransfer{\fS_n}$:
    \begin{enumerate}
        \item[(i)] $p$,
        \item[(ii)] $e_j$ for $i+1 \leq j \leq p-1$, and
        \item[(iii)] $e_je_p - e_{p+j}$ for $1\leq j \leq i$.
    \end{enumerate}
    It is immediate that $p\in \imtransfer{\fS_n}$, since
        $$\Tr\left(\frac{p}{n!}\cdot 1 \right) = \frac{p}{n!} \Tr(1) = p$$
    and $\frac{n!}{p}$ is invertible in $\Z_{(p)}$ for $n < 2p \leq p^2$. Writing $\Tr(x_1\cdots x_j) =j!(p+i-j)!e_j$, it follows that $e_j\in I^{\fS_n}$ if and only if $j\leq p-1$ and $p+i-j\leq p-1$. These are exactly the conditions needed for $e_j$ to lie in $J_n$.

    It is more complicated to show that $e_je_p-e_{p+j}$ is in the image of the transfer when $1\leq j \leq i$. For any partition $\lambda=(\lambda_1,\ldots,\lambda_n)$, let $a_\lambda$ be the order of the stabilizer of $\lambda$ in $\fS_n$, where $\fS_n$ acts by coordinate permutation. As we have seen, we can write $\Tr(x^\lambda) = a_\lambda m_\lambda$, where $m_\lambda$ is the monomial symmetric polynomial associated to $\lambda$, that is, the sum of all distinct elements in the $\fS_n$-orbit of $\lambda$. The $m_\lambda$ form a $\kk$-basis of $\kk[x_1,\ldots,x_n]^{\fS_n}$, for any $\kk$, as $\lambda$ runs over all weakly decreasing exponent vectors. Hence, for some $b_\lambda \in \mathbb{Q}$ we can write
        $$e_je_p - e_{p+j} = \sum_{\lambda} b_\lambda m_\lambda = \sum_\lambda \frac{b_\lambda}{a_\lambda} \Tr_{\mathbb{Q}}(x^\lambda).$$

    We will compute the coefficients $b_\lambda$ explicitly, then show that $\frac{b_\lambda}{a_\lambda}\in\Z_{(p)}$, that is, that $\frac{b_\lambda}{a_\lambda}$ has no powers of $p$ in the denominator when written in lowest terms. From this it follows that $e_je_p - e_{p+j}\in\imtransfer{\fS_n}$. When expanding $e_je_p-e_{p+j}$ in terms of monomial symmetric polynomials $m_\lambda$, the partitions $\lambda$ appearing with nonzero coefficient are of the form
        $$\lambda^{(k)} = (2^k, 1^{p+j-2k}, 0^{i-j+k})$$ 
    for $0\leq k \leq j$. We will consider the case $k=0$ separately later in the proof. For now, assume that $k\geq 1$. Then the coefficient $b_{\lambda^{(k)}}$ on $m_{\lambda^{(k)}}$ in $e_je_p$ is equal to $\binom{p+j-2k}{j-k}$. To see this, we can count how many times the monomial $x_1^2 \cdots x_k^2 x_{k+1} \cdots x_{p+j-k}$ appears in $e_je_p$. In order to obtain such a monomial, one must choose a term from $e_j$ divisible by $x_1\cdots x_k$ and a term from $e_p$ divisible by $x_1\cdots x_k$. It remains to choose the variables $x_{k+1}, \ldots, x_{k+p+j-2k}$ (which is a set of size $p+j-2k$), and $j-k$ of these must come from the $e_j$ term. Since $k\geq 1$, this is also the coefficient of $m_{\lambda^{(k)}}$ in $e_je_p-e_{p+j}$. 

    Since $a_\lambda = k!(p+j-2k)!(i-j+k)!$, we can write
    \begin{align*}
        e_je_p - e_{p+j} &= \frac{b_{(1^{p+j})}}{a_{(1^{p+j})}}\Tr_{\mathbb{Q}}(x^{(1^{p+j})}) + \sum_{k=1}^j \frac{\binom{p+j-2k}{j-k}}{k!(p+j-2k)!(i-j+k)!}\Tr_{\mathbb{Q}}(x^{\lambda^{(k)}}) \\
        &= \frac{b_{(1^{p+j})}}{a_{(1^{p+j})}}\Tr_{\mathbb{Q}}(x^{(1^{p+j})}) + \sum_{k=1}^j \frac{1}{(j-k)!(p-k)!k!(i-j+k)!}\Tr_{\mathbb{Q}}(x^{\lambda^{(k)}}).
    \end{align*}
    Observe that in each coefficient on $\Tr_{\mathbb{Q}}(x^{\lambda^{(k)}})$, for $k \geq 1$, there are no factors of $p$ in the denominator. Hence these coefficients are elements of $\Z_{(p)}$. It remains to determine the coefficient of $\Tr_{\mathbb{Q}}(x^{(1^{p+j})})$. Set $\lambda = (1^{p+j})$. We have $b_\lambda = \binom{p+j}{j}-1$, since $e_je_p$ gives $\binom{p+j}{j}$ terms $x_1\cdots x_{p+j}$, and $e_{p+j}$ has exactly one term of this form. Since $\lambda$ consists of $p+j$ 1's and $p+i-(p+j)$ 0's, we have $a_\lambda = (p+j)!(i-j)!$. Hence,
        $$ \frac{b_\lambda}{a_\lambda} = \frac{\binom{p+j}{j} - 1 }{(p+j)!(i-j)!} = \frac{(p+j)!-p!j!}{p!j!(p+j)!(i-j)!}.$$ 
    Now we can rewrite $(p+j)! = (p+j)(p+j-1)\cdots p!$ and cancel a copy of $p!$, giving
        $$\frac{b_\lambda}{a_\lambda} = \frac{(p+j)(p+j-1)\cdots (p+1) - j!}{j!(p+j)(p+j-1)\cdots (p)(p-1)!(i-j)!}.$$
    Note that $(p+j)(p+j-1)\cdots (p+1) \equiv j! \mod p$, and it is strictly larger than $p$. Hence we can write $(p+j)(p+j-1)\cdots (p+1) = j! + \ell p$ for some $\ell>0$. Moreover, there is only one factor of $p$ in the denominator. Hence we have
        $$\frac{b_\lambda}{a_\lambda} = \frac{j!+\ell p - j!}{j!(p+j)\cdots p(p-1)!(i-j)!} = \frac{\ell}{j!(p+j)\cdots (p+1)(p-1)!(i-j)!}\in\Z_{(p)}.$$
\end{proof}

\subsection{A base case}

\begin{prop}\label{prop: n=p}
    When $n = p$, the image of the transfer map is equal to the ideal $J_p = \langle p, e_1,\ldots,e_{p-1}\rangle$ of $\Z_{(p)}[e_1,\ldots,e_p]$.
\end{prop}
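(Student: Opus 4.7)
The plan is to prove each containment separately. For $J_p \subseteq I^{\fS_p}$, the same elementary computations as in the proof of Proposition \ref{prop: I contained in Im(Tr)} apply with the third family of generators vacuous when $i=0$: namely $p = \Tr\bigl(\tfrac{1}{(p-1)!}\bigr)$, and $e_j = \Tr(x_1\cdots x_j)/(j!(p-j)!)$ for $1 \leq j \leq p-1$, with each denominator a unit in $\Z_{(p)}$.

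For the reverse containment, since $\Tr$ is $\Z_{(p)}$-linear, $I^{\fS_p}$ is spanned by the elements $\Tr(x^\alpha)$, so it suffices to prove $\Tr(x^\alpha) \in J_p$ for every monomial $x^\alpha$. I would do this by working in the quotient $\Z_{(p)}[e_1,\ldots,e_p]/J_p \cong \F_p[e_p]$. Since $\Tr(x^\alpha)$ is homogeneous of degree $|\alpha|$ and $e_p$ has degree $p$, its image automatically vanishes when $p \nmid |\alpha|$. When $p \mid |\alpha|$, the image of $\Tr(x^\alpha)$ in the intermediate ring $\Z_{(p)}[e_p] = \Z_{(p)}[e_1,\ldots,e_p]/\langle e_1,\ldots,e_{p-1}\rangle$ must have the form $c_\alpha\, e_p^{|\alpha|/p}$ for a unique $c_\alpha \in \Z_{(p)}$, and the task reduces to showing $c_\alpha \in p\Z_{(p)}$. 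To extract $c_\alpha$, I plan to substitute $x_i \mapsto \zeta^{i-1}$ for $\zeta \in \mathbb{C}$ a primitive $p$-th root of unity: this kills $e_1,\ldots,e_{p-1}$ and sends $e_p$ to $(-1)^{p-1}$, so
\[ \Tr(x^\alpha)(1,\zeta,\ldots,\zeta^{p-1}) = (\pm 1)\, c_\alpha, \]
which is a rational integer.

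On the other hand, expanding directly and using $p \mid |\alpha|$ to absorb $\zeta^{-|\alpha|} = 1$,
\[ \Tr(x^\alpha)(1,\zeta,\ldots,\zeta^{p-1}) = \sum_{\sigma \in \fS_p} \zeta^{\sum_i \sigma(i)\alpha_i} = \sum_{k=0}^{p-1} N_k\, \zeta^k, \]
where $N_k := \#\{\sigma \in \fS_p : \sum_i \sigma(i)\alpha_i \equiv k \pmod p\}$. The crucial observation I will exploit is that the $p$-cycle $c = (1\,2\,\cdots\,p) \in \fS_p$ acts freely on $\fS_p$ by left multiplication and satisfies $\sum_i (c\sigma)(i)\alpha_i \equiv \sum_i \sigma(i)\alpha_i + |\alpha| \pmod p$; since $p \mid |\alpha|$, this shift is trivial, so each level set of $\sigma \mapsto \sum_i \sigma(i)\alpha_i \bmod p$ is a union of free $\langle c\rangle$-orbits. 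Hence $p \mid N_k$ for every $k$, so $\sum_k N_k \zeta^k$ lies in $p\Z[\zeta]$, and intersecting with $\Z$ gives $p \mid c_\alpha$ as required. The main obstacle will be isolating this cyclic-shift symmetry; once identified, the rest reduces to a straightforward orbit count.
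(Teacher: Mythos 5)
Your proof is correct, and the reverse containment takes a genuinely different route from the paper's. The paper first invokes Neusel's result (Theorem \ref{thm: image of transfer generation}) to reduce to showing $\Tr(x^\lambda)\in J_p$ only for special monomials $x^\lambda$, and then runs an induction on degree and dominance order: writing $e_k\Tr(x^{\widetilde\lambda})$ as a combination of $\Tr(x^\mu)$ for $\mu\leqdom\lambda$, with the coefficient of $\Tr(x^\lambda)$ an explicit binomial that is a unit in $\Z_{(p)}$. You instead treat all monomials directly: after reducing modulo $\langle e_1,\ldots,e_{p-1}\rangle$ to $\Z_{(p)}[e_p]$, you evaluate at the primitive $p$-th root of unity $\zeta$ (which kills $e_1,\ldots,e_{p-1}$ and sends $e_p$ to $\pm 1$), turning the divisibility question for $c_\alpha$ into a claim that a nonnegative-integer combination $\sum_k N_k\zeta^k$ lies in $p\Z[\zeta]$. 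The cyclic-shift observation — that the level sets of $\sigma\mapsto\sum_i\sigma(i)\alpha_i\bmod p$ are unions of free $\langle c\rangle$-orbits when $p\mid|\alpha|$ — cleanly gives $p\mid N_k$, and intersecting $p\Z[\zeta]$ with $\Z$ (which is valid because $1,\zeta,\ldots,\zeta^{p-2}$ is a $\Z$-basis of $\Z[\zeta]$) finishes the argument. Your version is more self-contained, avoiding both Neusel's generating set and the dominance-order induction; the paper's version fits into the broader special-monomial framework that appears elsewhere in modular invariant theory. Both are valid proofs of the proposition.
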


Before proceeding with the proof of Proposition \ref{prop: n=p}, we recall the definition of dominance order on partitions. The technique of induction on (degree and) dominance order is similar to what is used in Neusel's proof of \cite[Theorem 1.1]{neusel2}.

\begin{definition}
    Fix an integer $m\geq1$ and let $\lambda =(\lambda_1,\ldots,\lambda_n)$, $\mu = (\mu_1,\ldots,\mu_n)$ be two integer partitions of $m$, where the tuples $\lambda$ and $\mu$ have weakly decreasing nonnegative coordinates. We say that $\lambda$ \textit{dominates} $\mu$, or $\lambda\geqdom \mu$, if for each $1\leq j \leq n$, one has
    $\lambda_1 + \cdots + \lambda_j \geq \mu_1 + \cdots + \mu_j.$
\end{definition}

\begin{proof}[Proof of Proposition \ref{prop: n=p}]
    By Proposition \ref{prop: I contained in Im(Tr)}, there is a containment $J_p\subseteq I^{\fS_p}$. To show that the generators of $J_p$ also generate the image of the transfer, it is enough to show that the transfer of any special monomial (see Definition \ref{def: special monomials}) lies in $J_p$, by Theorem \ref{thm: image of transfer generation}. We will use induction on degree and dominance order. Let $x^\lambda$ be a special monomial such that $\lambda_1\geq 2$. Set $k:=\max\{j : \lambda_j\neq 0\}$. Since $\lambda$ is special and that $\lambda_1\geq 2$, we have $k\in\{2,\ldots,p-1\}$ with $\lambda_k = 1$. Define a new partition
        $$\widetilde\lambda :=(\lambda_1 - 1,\lambda_2-1,\ldots, \lambda_{k-1}-1,0,\ldots,0)$$
    obtained from $\lambda$ by subtracting 1 from all nonzero parts.
    Writing $e_k\, x^{\widetilde\lambda} = \sum_{1 \leq i_1 < \cdots < i_k \leq p}(x_{i_1}\cdots x_{i_k} x^{\widetilde\lambda})$, we can apply the transfer map to both sides and use its $\Z_{(p)}[e_1,\ldots,e_p]$-linearity:
    \begin{equation}\label{eq: n=p reduced lambda}
        e_k \Tr(x^{\widetilde\lambda}) = \sum_{1\leq i_1 < \cdots < i_k\leq p}\Tr(x_{i_1}\cdots x_{i_k}x^{\widetilde\lambda}).
    \end{equation}
    A generic term of the right-hand side is of the form $\Tr(x^\alpha)$, where $\alpha=(\alpha_1,\ldots,\alpha_p)$ is obtained from $\widetilde\lambda$ by adding 1 to exactly $k$ of its entries. The term $\Tr(x^\lambda)$ appears only when a 1 is added to every nonzero entry of $\widetilde\lambda$. Each nonzero entry of $\widetilde\lambda$ corresponds to an entry of $\lambda$ that is larger than 2. If $\widetilde\lambda$ has $\ell$ nonzero entries, then $1\leq \ell < k$ since $\lambda_k=1$. Hence the coefficient of $\Tr(x^\lambda)$ in (\ref{eq: n=p reduced lambda}) is $\binom{p-\ell}{k-\ell}$, which is invertible in $\Z_{(p)}$. Now given any term $\Tr(x^\alpha)$ in (\ref{eq: n=p reduced lambda}), let $\mu$ be the weakly decreasing rearrangement of $\alpha$. By construction, $\mu_j\leq \lambda_j$ for each $j\leq k$, with equality occurring if and only if $\mu=\lambda$. For $j>k$ we have 
        $$\mu_1+\cdots + \mu_j \leq \deg(x^\lambda) = \lambda_1+\cdots + \lambda_k = \lambda_1 + \cdots + \lambda_j.$$
    Hence, $\mu\leqdom \lambda$. It follows that we can write
        $$\Tr(x^\lambda) = \frac{1}{\binom{p-\ell}{k-\ell}}\left(e_k\Tr(x^{\widetilde\lambda}) + \sum_{\mu \lessdom \lambda} c_\mu \Tr(x^\mu)\right),$$
    so by induction $\Tr(x^\lambda)\in J_p$.
\end{proof}

\subsection{Using the $p$-Sylow subgroups of $\fS_n$}\label{subsection: p-Sylow}

To prove Theorem \ref{thm: main theorem intro}, we will make use of the fact that the $p$-Sylow subgroups of $\fS_n$ are isomorphic when $n$ lies in the range $\{kp, kp+1,\ldots, (k+1)p - 1\}$ for some $k$.

\begin{lemma}\label{lem: p-Sylows of Sn jump}
    Let $n,m$ be positive integers such that $kp \leq n < m < (k+1)p$ for some positive integer $k$. Then the $p$-Sylow subgroups of $\fS_n$ and $\fS_m$ are isomorphic. Moreover, any given $p$-Sylow subgroup of $\fS_n$ can be embedded into a $p$-Sylow subgroup of $\fS_m$ via the natural inclusion $\fS_n\into \fS_m$.
\end{lemma}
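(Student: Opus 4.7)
The plan is to reduce both claims simultaneously to the observation that a Sylow $p$-subgroup of $\fS_n$ can always be chosen to live entirely on the first $kp$ letters. Write $n = kp + r$ with $0 \leq r < p$, and consider the Young subgroup $\fS_{kp} \times \fS_r \leq \fS_n$, where $\fS_{kp}$ acts on $\{1,\ldots,kp\}$ and $\fS_r$ acts on $\{kp+1,\ldots,n\}$. Since $r < p$, the factor $\fS_r$ has trivial Sylow $p$-subgroup (as $r!$ is coprime to $p$), so a Sylow $p$-subgroup of the Young subgroup is simply a Sylow $p$-subgroup $P$ of $\fS_{kp}$, viewed as supported on $\{1,\ldots,kp\}$.

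The key step is to show that the index $[\fS_n : \fS_{kp} \times \fS_r] = \binom{n}{r}$ is coprime to $p$, so that $P$ is also a Sylow of $\fS_n$. This is where Kummer's theorem on the $p$-adic valuation of binomial coefficients comes in: the number of carries in the base-$p$ addition of $r$ and $kp$ equals $v_p\binom{n}{r}$, and since $r < p$ contributes only to the units digit while $kp$ has units digit zero, there are no carries and $\binom{n}{r}$ is a unit in $\Z_{(p)}$. Applying the same argument to $m = kp + s$ (with $0 \leq s < p$) identifies $P$ with a Sylow of $\fS_m$ via the natural inclusion $\fS_{kp} \into \fS_m$, proving both the isomorphism claim and producing an explicit Sylow common to $\fS_n$ and $\fS_m$.

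For the embedding statement applied to an arbitrary Sylow $Q$ of $\fS_n$, Sylow's theorem says that $Q$ is conjugate to the standard $P$ within $\fS_n$. The image of $Q$ under $\fS_n \into \fS_m$ is then conjugate within $\fS_m$ to $P$, which we have already identified as a Sylow $p$-subgroup of $\fS_m$. Hence the image of $Q$ is itself a Sylow of $\fS_m$, which in particular embeds into a Sylow (namely itself). The only real obstacle in this argument is the Kummer-theoretic computation that $\binom{n}{r}$ is a unit mod $p$, and that obstacle is very mild; everything else is routine Sylow theory together with the standard fact that a Sylow $p$-subgroup of a subgroup of $p'$-index is a Sylow $p$-subgroup of the ambient group.
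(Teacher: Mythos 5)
Your proof is correct, and it takes a mildly different route than the paper's. The paper simply observes that for $kp \leq n < (k+1)p$ one has $v_p(n!) = v_p((kp)!)$, so the image of an arbitrary Sylow $p$-subgroup $P_n \leq \fS_n$ under the natural inclusion $\fS_n \into \fS_m$ already has the full $p$-part of $|\fS_m|$ and is therefore Sylow in $\fS_m$ -- no Young subgroups, no conjugation step, no Kummer. Your version instead builds a canonical Sylow $P$ supported on $\{1,\ldots,kp\}$ by first passing to the Young subgroup $\fS_{kp}\times\fS_r$ and checking that the index $\binom{n}{r}$ is a $p'$-number via Kummer's theorem (a correct but slightly indirect way of verifying the same valuation identity $v_p(n!)=v_p((kp)!)$), and then handles an arbitrary $Q$ by Sylow conjugacy inside $\fS_n$ pushed forward to $\fS_m$. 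The two arguments are logically equivalent, and both are sound. What your route buys is a clean normal form for the Sylow (supported entirely on the first $kp$ letters), which makes the ``same Sylow works for every $n$ in the range'' picture vivid; what the paper's route buys is brevity, since it avoids the Young-subgroup/Kummer detour and the extra conjugation step by working with an arbitrary $P_n$ from the outset.
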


\begin{proof}
    Let $P_n$ be any $p$-Sylow subgroup of $\fS_n$. Then there is a subgroup $P_m$ of $\fS_m$ which is obtained from $P_n$ by applying the map 
    \begin{align*}
        \fS_n &\longrightarrow \fS_m, \\ 
        [w_1, \ldots, w_n] &\mapsto [w_1,\ldots,w_n, n+1, \ldots, m]
    \end{align*}
    where the permutations above are expressed in one-line notation. Then $P_n\cong P_m$ as groups. Because $n,m\in \{kp, \ldots, (k+1)p-1\}$, the orders of $\fS_n$ and $\fS_m$ share the same number of factors of $p$. Hence, the $p$-Sylow subgroups of $\fS_n$ and $\fS_m$ have the same order. Since we have exhibited a subgroup of $\fS_m$ of the correct order, it must be that all $p$-Sylow subgroups of $\fS_m$ are isomorphic to $P_m$.
\end{proof}

From now on, if $n,m$ satisfy $kp\leq n < m < (k+1)p$ for some $k$, we choose $p$-Sylow subgroups $P_n, P_m$ of $\fS_n, \fS_m$, respectively, so that $P_m$ is the image of $P_n$ under the natural inclusion $\iota:\fS_n \into \fS_m$, as in the proof of Lemma \ref{lem: p-Sylows of Sn jump}. When $P_n, P_m$ are chosen in this way, the variables $x_{n+1},\ldots,x_m \in \kk[x_1,\ldots,x_m]$ are all invariant under the action of $P_m$. The simple corollary below follows.

\begin{cor}\label{cor: invariant ring of p-Sylow}
    With $n, m, P_n, P_m$ as above, $\kk[x_1,\ldots,x_m]^{P_m} = \kk[x_1,\ldots,x_n]^{P_n}\otimes_\kk \kk[x_{n+1},\ldots,x_m]$, and there is a natural inclusion $\kk[x_1,\ldots,x_n]^{P_n}\into \kk[x_1,\ldots,x_m]^{P_m}$.
\end{cor}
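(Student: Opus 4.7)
The plan is to exploit the explicit construction of $P_m$ as the image of $P_n$ under the natural inclusion $\iota:\fS_n\into\fS_m$ from the proof of Lemma \ref{lem: p-Sylows of Sn jump}. By that construction, every element of $P_m$ is a permutation fixing $n+1,\ldots,m$ pointwise, so the induced action of $P_m$ on $\kk[x_1,\ldots,x_m]$ fixes each of $x_{n+1},\ldots,x_m$ and restricts on the subring $\kk[x_1,\ldots,x_n]$ to the action of $P_n$ under the identification $P_n\xrightarrow{\sim} P_m$.

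Given this, I would rewrite the polynomial ring via the standard $\kk$-algebra decomposition $\kk[x_1,\ldots,x_m]\cong \kk[x_1,\ldots,x_n]\otimes_\kk \kk[x_{n+1},\ldots,x_m]$, under which $P_m$ acts on the left factor by the $P_n$-action and on the right factor trivially. Fixing the monomial $\kk$-basis $\{x^\beta : \beta\in\N^{m-n}\}$ of the right factor, any $f\in\kk[x_1,\ldots,x_m]$ admits a unique expansion $f=\sum_\beta f_\beta\otimes x^\beta$ with $f_\beta\in\kk[x_1,\ldots,x_n]$, and for each $\sigma\in P_m$ we have $\sigma(f)=\sum_\beta \sigma(f_\beta)\otimes x^\beta$. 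Uniqueness of this expansion then forces $f\in\kk[x_1,\ldots,x_m]^{P_m}$ if and only if $f_\beta\in\kk[x_1,\ldots,x_n]^{P_n}$ for every $\beta$, which is exactly the asserted equality of subrings. The natural inclusion $\kk[x_1,\ldots,x_n]^{P_n}\into \kk[x_1,\ldots,x_m]^{P_m}$ is then the map $g\mapsto g\otimes 1$.

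This is ultimately a formal consequence of the general fact that $G$-invariants commute with tensoring by a $\kk$-free trivial $\kk G$-module, so I do not anticipate any substantive obstacle. The only ingredient specific to this setup is the careful choice of $P_m$ as $\iota(P_n)\subseteq \fS_m$, which is precisely what makes $x_{n+1},\ldots,x_m$ genuinely $P_m$-fixed and decouples them from the $P_n$-action on $\kk[x_1,\ldots,x_n]$.
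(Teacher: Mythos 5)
Your argument is correct and matches the paper's reasoning: the paper states no separate proof for this corollary, presenting it as an immediate consequence of the preceding observation that $P_m = \iota(P_n)$ fixes $x_{n+1},\ldots,x_m$, and your expansion of $f$ over a monomial $\kk$-basis of $\kk[x_{n+1},\ldots,x_m]$ is exactly the routine verification the paper leaves implicit.
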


We may use Lemma \ref{lem: p-Sylows of Sn jump} and Corollary \ref{cor: invariant ring of p-Sylow} to deduce a relationship between the transfer ideals $\imtransfer{P_n}$ and $\imtransfer{P_{n+1}}$, as long as $n+1$ is not a multiple of $p$.

\begin{lemma}\label{lem: p-Sylow extension ideal}
    Let $n \in\{kp, \ldots, (k+1)p - 2\}$ and let $P_n, P_{n+1}$ be $p$-Sylow subgroups of $\fS_n, \fS_{n+1}$ satisfying $\iota(P_n) = P_{n+1}$. Let $R_n$ and $R_{n+1}$ denote the $P_n$- and $P_{n+1}$-invariant subrings of $\kk[x_1,\ldots,x_n]$ and $\kk[x_1,\ldots,x_{n+1}]$, respectively. Then, $\imtransfer{P_{n+1}}$ is the extension ideal $R_{n+1}\cdot\imtransfer{P_n}$.
\end{lemma}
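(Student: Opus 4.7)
The plan is to exploit the fact that, under the chosen identification $P_{n+1} = \iota(P_n)$, every element of $P_{n+1}$ fixes $x_{n+1}$ and acts on $\kk[x_1,\ldots,x_n]$ exactly as the corresponding element of $P_n$. This makes the transfer map for $P_{n+1}$ decompose cleanly across powers of $x_{n+1}$, which is all the structure we should need.

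For the containment $\imtransfer{P_{n+1}} \subseteq R_{n+1}\cdot \imtransfer{P_n}$, I would start by noting that $\imtransfer{P_{n+1}}$ is generated as an $R_{n+1}$-module by the images $\Tr^{P_{n+1}}(x^\alpha)$ for monomials $x^\alpha \in \kk[x_1,\ldots,x_{n+1}]$. Writing $\alpha = (\alpha',\alpha_{n+1})$ with $\alpha' = (\alpha_1,\ldots,\alpha_n)$, the fact that each $\sigma \in P_{n+1}$ fixes $x_{n+1}$ and restricts to an element of $P_n$ on $x_1,\ldots,x_n$ yields
\[
\Tr^{P_{n+1}}(x^\alpha) \;=\; \sum_{\sigma \in P_{n+1}} \sigma(x^{\alpha'})\, x_{n+1}^{\alpha_{n+1}} \;=\; x_{n+1}^{\alpha_{n+1}} \, \Tr^{P_n}(x^{\alpha'}).
\]
Since $x_{n+1} \in R_{n+1}$ by Corollary \ref{cor: invariant ring of p-Sylow} and $\Tr^{P_n}(x^{\alpha'}) \in \imtransfer{P_n}$, each such generator lies in $R_{n+1}\cdot\imtransfer{P_n}$.

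For the reverse containment, the key observation is that for any $g \in \kk[x_1,\ldots,x_n]$, the bijection $\iota\colon P_n \to P_{n+1}$ and the identification of actions give
\[
\Tr^{P_{n+1}}(g) \;=\; \sum_{\sigma \in P_n} \iota(\sigma)(g) \;=\; \sum_{\sigma \in P_n} \sigma(g) \;=\; \Tr^{P_n}(g).
\]
Hence $\imtransfer{P_n}$ sits naturally inside $\imtransfer{P_{n+1}}$ under the inclusion $R_n \into R_{n+1}$, and multiplying by elements of $R_{n+1}$ preserves $\imtransfer{P_{n+1}}$, which gives $R_{n+1}\cdot \imtransfer{P_n} \subseteq \imtransfer{P_{n+1}}$.

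I do not anticipate any real obstacle here: the whole argument reduces to the compatibility of $\Tr^{P_{n+1}}$ with the $\kk[x_{n+1}]$-module decomposition of $\kk[x_1,\ldots,x_{n+1}]$, which is immediate once one has set up $P_{n+1} = \iota(P_n)$. The only point worth being careful about is that the hypothesis $n \leq (k+1)p - 2$ is used implicitly to guarantee $n+1 \leq (k+1)p - 1$, so that Lemma \ref{lem: p-Sylows of Sn jump} applies and the embedding $\iota(P_n) = P_{n+1}$ is genuinely a choice of Sylow $p$-subgroup of $\fS_{n+1}$.
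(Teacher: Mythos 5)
Your proposal is correct and follows essentially the same approach as the paper. The only cosmetic difference is that you handle the containment $\imtransfer{P_{n+1}}\subseteq R_{n+1}\cdot\imtransfer{P_n}$ uniformly via the decomposition $\alpha=(\alpha',\alpha_{n+1})$, whereas the paper splits into the cases $x_{n+1}\mid x^\alpha$ and $x_{n+1}\nmid x^\alpha$; and for the reverse containment the paper pulls the $R_{n+1}$-coefficients inside the transfer using $R_{n+1}$-linearity of $\transfer{P_{n+1}}$, while you equivalently invoke that $\imtransfer{P_{n+1}}$ is already an $R_{n+1}$-ideal.
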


\begin{proof}
    The ideal $\imtransfer{P_{n+1}}$ is generated by the transfers of all monomials $x^\alpha$ in $\kk[x_1,\ldots,x_{n+1}]$. Given such a monomial, either $x_{n+1}$ divides $x^\alpha$ or it does not. If $x_{n+1}$ does not divide $x^\alpha$, then
        $$\transfer{P_{n+1}}(x^\alpha) = \transfer{P_n}(x^\alpha) \in R_{n+1}\cdot\imtransfer{P_n}.$$
    If $x_{n+1}$ divides $x^\alpha$, we can factor $x^\alpha$ as $x^{\widetilde\alpha}\cdot x_{n+1}^b$ with $\gcd(x^{\widetilde\alpha}, x_{n+1}^b) = 1$. Using the fact that $x_{n+1}$ is fixed by $P_{n+1}$ and the action of $P_{n+1}$ is multiplicative, we have
    \begin{align*}
        \transfer{P_{n+1}}(x^\alpha) &= \sum_{w \in P_{n+1}} (wx^{\widetilde\alpha})(wx_{n+1}^b) \\
        &=\sum_{w\in P_{n+1}} (wx^{\widetilde\alpha})x_{n+1}^b \\
        &= x_{n+1}^b \sum_{w\in \iota(P_n)}wx^{\widetilde\alpha}\\
        &= x_{n+1}^b \transfer{P_n}(x^{\widetilde\alpha}) \quad \in R_{n+1}\cdot\imtransfer{P_n}.
    \end{align*}
    We have shown the containment $\imtransfer{P_{n+1}}\subseteq R_{n+1}\cdot\imtransfer{P_n}$. To show the reverse containment, take $f \in R_{n+1}\cdot\imtransfer{P_n}$ and write $f = \sum_j r_j \transfer{P_n}(f_j)$ for some $r_j \in R_{n+1}$ and $f_j \in \kk[x_1,\ldots,x_n]$. Since the $f_j$ use only the variables $x_1,\ldots,x_n$, we have $\transfer{P_n}(f_j) = \transfer{P_{n+1}}(f_j)$. Since $\transfer{P_{n+1}}$ is a map of $R_{n+1}$-modules and each $r_j \in R_{n+1}$, it follows that $f = \transfer{P_{n+1}}\left(\sum_j r_j\cdot f_j\right)\in \imtransfer{P_{n+1}}$.
\end{proof}

\begin{cor}\label{cor: height of p-sylow}
    With $\imtransfer{P_n}\subset R_n$ and $\imtransfer{P_{n+1}}\subset R_{n+1}$ as before,
        $\hgt(\imtransfer{P_n}) = \hgt(\imtransfer{P_{n+1}}).$
\end{cor}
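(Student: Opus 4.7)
The plan is to combine the two immediately preceding results to reduce this corollary to a standard fact about heights in polynomial ring extensions. By Corollary \ref{cor: invariant ring of p-Sylow}, the inclusion $R_n \hookrightarrow R_{n+1}$ realizes $R_{n+1}$ as the polynomial ring $R_n[x_{n+1}]$ in one variable over $R_n$. By Lemma \ref{lem: p-Sylow extension ideal}, $\imtransfer{P_{n+1}} = R_{n+1}\cdot \imtransfer{P_n}$ is simply the extension of $\imtransfer{P_n}$ along this inclusion.

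It then suffices to invoke the standard result that for a Noetherian ring $R$ and any ideal $I \subset R$, $\hgt_R(I) = \hgt_{R[t]}(IR[t])$. The key point is that for any prime $\pp \subset R$, the extension $\pp R[t]$ is prime with $\hgt(\pp R[t]) = \hgt(\pp)$: the inclusion $R \hookrightarrow R[t]$ is faithfully flat, so chains of primes in $R$ strictly descending from $\pp$ lift to chains in $R[t]$ strictly descending from $\pp R[t]$, and going-down rules out the reverse inequality. Since the minimal primes of $I R[t]$ are exactly the extensions $\pp R[t]$ of minimal primes $\pp$ of $I$, taking the minimum of heights gives the desired equality.

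To apply this fact in our setting, one needs $R_n$ to be Noetherian; this holds because $\kk = \Z_{(p)}$ is Noetherian and $R_n$ is a finitely generated $\kk$-algebra by Hilbert--Noether finite generation for invariant rings of finite groups. I do not anticipate any real obstacle: this corollary is essentially the bookkeeping that lets us transfer height information between $\imtransfer{P_n}$ and $\imtransfer{P_{n+1}}$ as $n$ ranges over $\{kp, \ldots, (k+1)p - 1\}$, which is what will feed into the height computation for $\imtransfer{\fS_n}$ in the proof of the main theorem.
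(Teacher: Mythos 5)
Your proof is correct and takes essentially the same route as the paper: both arguments rest on the flatness of $R_n \hookrightarrow R_{n+1}$ (and hence going-down) to conclude height is preserved under extension of the ideal $\imtransfer{P_n}$. The paper simply cites Matsumura's Theorem 19(3) for flat extensions with surjective $\Spec$ map, whereas you make explicit that Corollary \ref{cor: invariant ring of p-Sylow} exhibits $R_{n+1}$ as the one-variable polynomial ring $R_n[x_{n+1}]$ and then invoke the standard $\hgt_R(I) = \hgt_{R[t]}(IR[t])$ fact; this is a modest unpacking of the same idea rather than a different method.
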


\begin{proof}
    The extension of rings $R_n \into R_{n+1}$ is flat, hence the going down theorem holds. Moreover, the induced map $\Spec{R_{n+1}} \to \Spec{R_n}$ is surjective. By \cite[Theorem 19 (3)]{matsumura}, the heights of $\imtransfer{P_n}$ and its extension ideal $\imtransfer{P_{n+1}} = R_{n+1}\cdot\imtransfer{P_n}$ are equal.
\end{proof}

Now that we have related $\imtransfer{P_n}$ and $\imtransfer{P_{n+1}}$, we can compare the heights of $\imtransfer{P_n}$ and $\imtransfer{\fS_n}$. To do so, we appeal to a corollary found in \cite{ShankWehlau}, restated slightly to accommodate the case that $\kk$ is not a field.

\begin{cor}\label{cor: height of p Sylow equals height of group}\cite[Corollary 5.2]{ShankWehlau}
    Assume that $\kk$ is a commutative ring in which $|\fS_n:H|$ is invertible, where $H$ is a subgroup of $\fS_n$ acting on $S:=\kk[x_1\ldots,x_n]$. Then $\hgt(\imtransfer{\fS_n}) = \hgt(\imtransfer{H}).$
\end{cor}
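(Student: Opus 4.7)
My plan is to reduce the height equality to a clean ideal-theoretic identity and then apply standard facts about integral extensions. The identity in question is
\[
\imtransfer{H}\cap S^{\fS_n}\;=\;\imtransfer{\fS_n}
\]
as ideals inside $S^{\fS_n}$; granted this, both heights are controlled by the same prime correspondence across the finite extension $S^{\fS_n}\subseteq S^H$.

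For the identity, I would exploit two coset-based formulas for $\Tr^{\fS_n}$. Writing $\fS_n=\bigsqcup_{i=1}^m H\sigma_i$ as right cosets, one has $\Tr^{\fS_n}(f)=\sum_i\Tr^H(\sigma_i f)$, so $\imtransfer{\fS_n}\subseteq\imtransfer{H}\cap S^{\fS_n}$. Decomposing into left cosets instead yields the transitivity $\Tr^{\fS_n}=\Tr^{\fS_n}_H\circ\Tr^H$, where $\Tr^{\fS_n}_H(g):=\sum_i\sigma_i(g)$ is a well-defined $S^{\fS_n}$-linear map $S^H\to S^{\fS_n}$ (the relative transfer). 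Since $m=[\fS_n:H]$ is a unit in $\kk$, the scalar multiple $\pi:=m^{-1}\Tr^{\fS_n}_H$ is a relative Reynolds operator and retracts $S^H$ onto $S^{\fS_n}$. Any $g=\Tr^H(f)$ lying in $S^{\fS_n}$ therefore satisfies $g=\pi(g)=m^{-1}\Tr^{\fS_n}(f)\in\imtransfer{\fS_n}$, proving the reverse containment.

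To convert the ideal identity into height equality I would use that $S^{\fS_n}\subseteq S^H$ is a finite (hence integral) extension of normal Noetherian domains: $S^{\fS_n}=\Z_{(p)}[e_1,\ldots,e_n]$ is a polynomial ring over a regular local ring, while $S^H$ is the invariant ring of a finite group acting on the UFD $S$, hence is normal. Consequently all of Cohen--Seidenberg (lying over, going up, going down, incomparability) applies. For $\mathfrak{q}\in\Spec S^H$ minimal over $\imtransfer{H}$, the contraction $\mathfrak{p}=\mathfrak{q}\cap S^{\fS_n}$ contains $\imtransfer{\fS_n}$ by the identity, and $\hgt\mathfrak{p}=\hgt\mathfrak{q}$ by going down plus incomparability, giving $\hgt\imtransfer{\fS_n}\leq\hgt\imtransfer{H}$. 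Conversely, for $\mathfrak{p}\in\Spec S^{\fS_n}$ minimal over $\imtransfer{\fS_n}$, the sum $\mathfrak{p}S^H+\imtransfer{H}$ contracts back to $\mathfrak{p}$ (by applying $\pi$ together with the identity), hence is proper; lying over for the finite extension $S^{\fS_n}/\mathfrak{p}\hookrightarrow S^H/(\mathfrak{p}S^H+\imtransfer{H})$ then produces a prime $\mathfrak{q}$ over $\mathfrak{p}$ containing $\imtransfer{H}$, so $\hgt\imtransfer{H}\leq\hgt\mathfrak{q}\leq\hgt\mathfrak{p}=\hgt\imtransfer{\fS_n}$. The main delicate point is checking Cohen--Seidenberg over the local base $\kk=\Z_{(p)}$ rather than a field; this goes through because $\Z_{(p)}$ is itself a regular local domain, so its polynomial rings and finite-group invariants remain normal Noetherian domains.
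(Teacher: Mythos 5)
Your proposal is correct and takes essentially the same route as the paper: the ideal identity $\imtransfer{H}\cap S^{\fS_n}=\imtransfer{\fS_n}$ that you establish via coset decompositions and the relative Reynolds operator is exactly the ``lying over'' statement the paper cites from \cite[Prop.\ 5.1]{ShankWehlau}, and your deduction of height equality via going-down and incomparability for the integral extension of normal domains $S^{\fS_n}\into S^H$ is precisely the content the paper invokes from \cite[Thm.\ 20(3)]{matsumura}. You have simply unpacked both citations into self-contained arguments.
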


\begin{proof}
    Under the assumption that $|\fS_n:H|\in \kk^\times$, it follows from \cite[Proposition 5.1]{ShankWehlau}, that $I^H$ lies over $I^{\fS_n}$ in the integral ring extension $S^{\fS_n} \into S^H$. Since both $S^{\fS_n}$ and $S^H$ are integral domains and $S^{\fS_n}$ is integrally closed\footnote{When $\kk=\Z$ or $\kk=\Z_{(p)}$, the ring of invariants $S^{\fS_n}$ is a polynomial ring over a UFD, hence is a UFD itself. For $\kk$ a field, it is well-known that any ring of invariants of a finite group is integrally closed; see \S1.7 of \cite{NeuselSmithBook}.}, the going down theorem holds for this extension of rings. By \cite[Theorem 20 (3)]{matsumura}, we have $\hgt(I^{\fS_n}) = \hgt(I^H)$.
\end{proof}

\subsection{Proof of Theorem \ref{thm: main theorem intro}}\label{subsection: main proof}

It remains to put together the content in the previous sections to prove our main theorem. We apply Corollary \ref{cor: height of p Sylow equals height of group} in the case that $\kk = \Z_{(p)}$ and $H = P_n$.

\begin{proof}[Proof of Theorem \ref{thm: main theorem intro}]
Combining Corollary \ref{cor: height of p-sylow} with Corollary \ref{cor: height of p Sylow equals height of group}, we conclude that for any $~n,m \in \{kp, kp+1,\ldots,(k+1)p-1\}$ for some $k\geq 1$, we have $$\hgt(\imtransfer{\fS_n}) = \hgt(I^{P_n}) = \hgt(I^{P_m}) = \hgt(\imtransfer{\fS_m}).$$ When $n \in \{p,\ldots, 2p-1\}$, Proposition \ref{prop: n=p} shows that $\hgt(\imtransfer{\fS_n}) = \hgt(\imtransfer{\fS_p}) = p$. Since $\hgt(\imtransfer{\fS_n}) = p$, we can find a prime $\qq_n$ of height $p$ in $\Z_{(p)}[e_1,\ldots,e_n]$ such that $\imtransfer{\fS_n} \subseteq \qq_n$. On the other hand, Proposition \ref{prop: I contained in Im(Tr)} shows that $J_n \subseteq \imtransfer{\fS_{n}}$. But $J_n$ is itself a prime ideal of height $p$, hence we have a containment
    $$J_n \subseteq \imtransfer{\fS_n} \subseteq \qq_n,$$
with $J_n$ and $\qq_n$ both primes of height $p$. From this we conclude that $J_n  = I^{\fS_n}= \qq_n$, proving the theorem.
\end{proof}

\section{Applications}\label{section: applications}

\subsection{The cofixed space with $\F_p$ coefficients}

By Lemma \ref{lem: base change}, we can find the structure of $\F_p[x_1,\ldots,x_n]_{\fS_n}$ as an $\F_p[e_1,\ldots,e_n]$-module by studying the module $I^{\fS_n}_{\Z_{(p)}} \otimes_\Z \Z/p\Z$, on which $\F_p[e_1,\ldots,e_n]$ acts by multiplication on the left tensor factor. 

\begin{thm}\label{thm: F_p structure}
Let $n = p+i$ with $0\leq i\leq p-1$. Then the cofixed space $\F_p[x_1,\ldots,x_n]_{\fS_n}$ has the following direct sum decomposition as a module over $\F_p[e_1,\ldots,e_n]$:
    \begin{equation}\label{eq: Fp cofixed decomp}
    \F_p[x_1,\ldots,x_n]_{\fS_n} \cong \frac{\F_p[e_1,\ldots,e_n]}{\widetilde{J}_n} \oplus \widetilde{J}_n
    \end{equation}
    where $\widetilde{J}_n$ is the ideal $\langle e_1e_p -e_{p+1},\, e_2e_p-e_{p+2},\,\ldots,e_ie_p-e_{p+i},\,e_{i+1},\, e_{i+2},\,\ldots,e_{p-1}\rangle$ of $\F_p[e_1,\ldots,e_n]$.
\end{thm}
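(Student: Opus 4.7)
The plan is to deduce Theorem \ref{thm: F_p structure} from Theorem \ref{thm: main theorem intro} by a base change argument. By Lemma \ref{lem: base change}, I would identify
$$\F_p[x_1,\ldots,x_n]_{\fS_n} \cong \Z_{(p)}[x_1,\ldots,x_n]_{\fS_n} \otimes_{\Z_{(p)}} \F_p,$$
and then use Theorem \ref{thm: main theorem intro} to rewrite the right-hand side as $J_n/pJ_n$ over $\bar{R} := \F_p[e_1,\ldots,e_n]$, where $R := \Z_{(p)}[e_1,\ldots,e_n]$ and $J_n \subset R$ is the regular-sequence ideal from Theorem \ref{thm: main theorem intro}. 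The task thus reduces to decomposing $J_n/pJ_n$ as an $\bar{R}$-module.

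Label the generators of $J_n$ as $g_0 := p$ and $g_k$ equal to the remaining non-$p$ generators for $1 \leq k \leq p-1$, so that the $\bar{g}_k$ are exactly the generators of $\widetilde{J}_n$. Since $g_0,\ldots,g_{p-1}$ is a regular sequence in $R$ by Theorem \ref{thm: main theorem intro}, the Koszul complex on these elements is exact and produces a free presentation
$$R^{\binom{p}{2}} \xrightarrow{\phi} R^p \longrightarrow J_n \longrightarrow 0,$$
with $\phi(e_i \wedge e_j) = g_j e_i - g_i e_j$ and the middle map sending $e_k$ to $g_k$. Applying $-\otimes_R \bar{R}$ (which is right exact) gives a presentation of $J_n/pJ_n$ over $\bar{R}$.

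The crux is that reduction modulo $p$ kills $g_0$, so the reduced Koszul relations decouple: relations indexed by $e_0 \wedge e_j$ become $\bar{g}_j e_0$ (involving only the basis vector $e_0$), while those indexed by $e_i \wedge e_j$ for $1 \leq i < j$ become $\bar{g}_j e_i - \bar{g}_i e_j$ (involving only $e_1,\ldots,e_{p-1}$). The reduced presentation matrix therefore block-decomposes with respect to the splitting $\bar{R}^p = \bar{R} e_0 \oplus \bigoplus_{k=1}^{p-1} \bar{R} e_k$, yielding a direct sum decomposition of the cokernel. The first block gives $\bar{R}/\widetilde{J}_n$, and the second block is the Koszul presentation of the ideal generated by $\bar{g}_1,\ldots,\bar{g}_{p-1}$, namely $\widetilde{J}_n$ itself. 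That the second block genuinely presents the ideal (rather than some further quotient) relies on the regularity of $\bar{g}_1,\ldots,\bar{g}_{p-1}$ in $\bar{R}$, which holds because the tail of any regular sequence remains regular after quotienting by the initial term.

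The one step requiring real attention is verifying this clean block decomposition, which rests entirely on the vanishing $\bar{g}_0 = 0$ in $\bar{R}$. Once the block structure is identified and the residual regularity of $\bar{g}_1,\ldots,\bar{g}_{p-1}$ is noted, the decomposition follows formally from standard Koszul complex facts, with no further computation needed.
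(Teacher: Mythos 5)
Your argument is correct and genuinely different from the paper's. The paper constructs an explicit short exact sequence: it defines a surjection $\varphi: I^{\fS_n}\otimes_\Z\Z/p\Z \to \widetilde{J}_n$ by $f\otimes a \mapsto af$, identifies $\ker(\varphi)$ as a cyclic module generated by $p\otimes 1$ with annihilator $\widetilde{J}_n$, and then exhibits a splitting $\psi$ by sending each formal Koszul-type basis vector $b_j$ to $g_j\otimes 1$ and invoking regularity of the $\bar{g}_j$ to see that $\psi$ is well-defined on $\widetilde{J}_n$. You instead start from the Koszul presentation of $J_n$ over $\Z_{(p)}[e_1,\ldots,e_n]$ (exact because the generators form a regular sequence), tensor down to $\F_p[e_1,\ldots,e_n]$, and observe that the vanishing $\bar{g}_0 = 0$ makes the reduced presentation matrix block-diagonal with respect to $\bar{R}^p = \bar{R}e_0 \oplus \bigoplus_{k\geq 1}\bar{R}e_k$; the first block presents $\bar{R}/\widetilde{J}_n$ and the second is exactly the Koszul presentation of $\widetilde{J}_n$, which presents the ideal itself because $\bar{g}_1,\ldots,\bar{g}_{p-1}$ remain regular in $\bar{R}=R/(g_0)$. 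Both approaches use the same three ingredients (the base change lemma, the regular sequence property, and the special role of the generator $p$), but your route packages the splitting as a structural fact about the reduced Koszul presentation rather than building the section by hand, which is arguably cleaner conceptually; the paper's version has the advantage of exhibiting the two component maps explicitly without appealing to the decomposition of a cokernel along a block-diagonal presentation.
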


\begin{proof}
    Let $R = \F_p[e_1,\ldots,e_n]$. We first define a map of graded $R$-modules by
    \begin{align*}
        \varphi: I^{\fS_n} \otimes_\Z \Z/p\Z &\longrightarrow \widetilde{J}_n, \\
        f\otimes a &\mapsto af.
    \end{align*}
    This is a surjective map since any generator $g$ of $\tilde{J}_n$ has $g\otimes 1$ as a preimage. Any basic tensor in $\ker(\varphi)$ must have a representative $f\otimes a$ satisfying either that $a=0$ in $\Z/p\Z$ or that $f$ is a multiple of $p$ in $I^{\fS_n}$. Hence, any non-zero basic tensor in $\ker(\varphi)$ must be an $R$-multiple of $p\otimes 1$. On the other hand, any element of $I^{\fS_n}\otimes_\Z \Z/p\Z$ is a basic tensor: If $x=\sum r_j(f_j \otimes a_j)$ is an element of $I^{\fS_n}\otimes_\Z \Z/p\Z$, where $r_j\in R, f_j\in I^{\fS_n}$, and $a_j\in\Z$ is a representative of $a_j+p\Z$, then $x$ can be rewritten
    $$x=\sum r_j(f_j \otimes a_j) = \sum (r_jf_j\otimes a_j) = \sum (a_jr_jf_j\otimes 1) = \left(\sum a_jr_jf_j\right)\otimes1.$$
    Hence, $\ker(\varphi)$ is a cyclic $R$-module generated by $p\otimes 1$. Moreover, $r(p\otimes 1)=0$ for some $r\in R$ if and only if $r\in\tilde{J}_n$, so $\ker(\varphi)\cong R/\tilde{J}_n$ as graded $R$-modules.
    
    We now construct a right inverse for $\varphi$. Let the free module $R^{p-1}$ have $R$-basis $b_1,\ldots,b_{p-1}$. Define a map
    \begin{align*}
        \hat\psi: R^{p-1} &\longrightarrow I^{\fS_n}\otimes_\Z \Z/p\Z \\
        b_j &\mapsto \begin{cases}
            (e_je_p - e_{p+j})\otimes 1 & \text{if }1\leq j \leq i \\
            e_j\otimes 1 & \text{if }i+1\leq j \leq p-1
        \end{cases}
    \end{align*}
    and extend $R$-linearly. Label the minimal generators of $\tilde{J}_n$ by $f_1,\ldots,f_{p-1}$, where $f_j$ has degree $j$ mod $p$. Then the map $\hat\psi$ satisfies
    $$f_k\hat\psi(b_j) - f_j\hat\psi(b_k) = 0$$
    for each $1\leq j,k\leq p-1$. Since the generators of $\tilde{J}_n$ form an $R$-regular sequence, the map $\hat\psi$ descends to a map $\psi:\tilde{J}_n\to I^{\fS_n}\otimes_\Z \Z/p\Z$. Moreover, $\psi$ is a right inverse for $\varphi$, and therefore $I^{\fS_n}\otimes_\Z \Z/p\Z$ splits as a direct sum of $R$-modules
    $$I^{\fS_n}\otimes_\Z \Z/p\Z \cong \ker(\varphi)\oplus \tilde{J}_n \cong R/\tilde{J}_n\oplus \tilde{J}_n.$$
\end{proof}

\subsection{The image of the transfer map with $\F_p$ coefficients}

\begin{cor}\label{cor: Fp transfer}
    Let $n \in \{p,p+1,\ldots,2p-1\}$. Then the image of the transfer map $\Tr^{\fS_n}_{\F_p}:\F_p[x_1,\ldots,x_n]\to \F_p[e_1,\ldots,e_n]$ is equal to the ideal
    $$\tilde{J}_n = \langle e_1e_p -e_{p+1},\,\, e_2e_p-e_{p+2},\,\ldots,\,e_ie_p-e_{p+i},\,\,e_{i+1},\,\, e_{i+2},\,\ldots,\,e_{p-1}\rangle \subset \F_p[e_1,\ldots,e_n].$$
\end{cor}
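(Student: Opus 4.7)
The plan is to deduce this corollary directly from Theorem \ref{thm: main theorem intro} via base change. The transfer map $\Tr^{\fS_n}(f) = \sum_{\sigma\in\fS_n}\sigma(f)$ is defined by a formula involving only addition and the $\fS_n$-action on variables, both of which commute with any ring homomorphism applied to the coefficient ring. In particular, the reduction map $\pi:\Z_{(p)}\to\F_p$ induces surjections $\pi:\Z_{(p)}[x_1,\ldots,x_n]\twoheadrightarrow\F_p[x_1,\ldots,x_n]$ and $\pi:\Z_{(p)}[e_1,\ldots,e_n]\twoheadrightarrow\F_p[e_1,\ldots,e_n]$ satisfying
$$\pi\circ \Tr^{\fS_n}_{\Z_{(p)}} \;=\; \Tr^{\fS_n}_{\F_p}\circ \pi.$$

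Because the $\pi$ on the source is surjective, every element in the image of $\Tr^{\fS_n}_{\F_p}$ is the image under $\pi$ of some element in the image of $\Tr^{\fS_n}_{\Z_{(p)}}$; that is, $\Im(\Tr^{\fS_n}_{\F_p}) = \pi\bigl(\Im(\Tr^{\fS_n}_{\Z_{(p)}})\bigr) = \pi(I^{\fS_n}_{\Z_{(p)}})$. Theorem \ref{thm: main theorem intro} identifies the latter as $J_n$.

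It then remains only to compute $\pi(J_n)$. Applying $\pi$ to the given generating set of $J_n$, the generator $p$ becomes $0$, while the remaining generators $e_je_p-e_{p+j}$ (for $1\leq j\leq i$) and $e_j$ (for $i+1\leq j\leq p-1$) map to precisely the listed generators of $\tilde{J}_n$. Since $\pi$ is surjective on the ambient polynomial ring, the image ideal $\pi(J_n)$ is generated by these images, namely $\tilde{J}_n$. There is no serious obstacle in this argument: all of the substantive work is contained in Theorem \ref{thm: main theorem intro}, and the role of this corollary is simply to transport that result across the base change $\Z_{(p)}\to\F_p$.
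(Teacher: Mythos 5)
Your argument is correct and is essentially the same as the paper's: both rest on the commuting square $\pi\circ\Tr^{\fS_n}_{\Z_{(p)}} = \Tr^{\fS_n}_{\F_p}\circ\pi$ and surjectivity of the reduction map, so that the $\F_p$-transfer image is the surjective image of $I^{\fS_n}_{\Z_{(p)}} = J_n$, which is visibly $\tilde{J}_n$ after the generator $p$ is killed.
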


\begin{proof}
    When $G$ is a permutation group, one can verify using a monomial basis for the polynomial ring that the square
    \begin{center}
    \begin{tikzcd}
    \Z_{(p)}[x_1,\ldots,x_n] \arrow[d, two heads] \arrow[r, "\Tr^G_{\Z_{(p)}}"] & \Z_{(p)}[x_1,\ldots,x_n]^G \arrow[d, two heads] \\
    \F_p[x_1,\ldots,x_n] \arrow[r, "\Tr^G_{\F_p}"] & \F_p[x_1,\ldots,x_n]^G
    \end{tikzcd}
    \end{center}
    commutes. Since the map $\Z_{(p)}[x_1,\ldots,x_n] \to \F_p[x_1,\ldots,x_n]$ is surjective, the image of $\Tr^{\fS_n}_{\F_p}$ can found by taking $I^{\fS_n}_{\Z_{(p)}}$ of Theorem \ref{thm: main theorem intro} and applying the natural surjection to $\F_p[e_1,\ldots,e_n]$. This gives exactly the ideal $\tilde{J}_n$.
\end{proof}

Corollary \ref{cor: Fp transfer} is consistent with several other results on the image of the transfer map for modular representations $G\into \GL_n(\kk)$, where $\kk$ is a field of characteristic $p>0$. Campbell, Hughes, Shank, and Wehlau remark that the generating set for $\tilde{J}_p$ follows from \cite[Theorem 9.18]{CampbellHughesShankWehlau}, where they give a block basis for the image of $\Tr^{\fS_n}_\kk$, which in general is a redundant generating set. Shank and Wehlau showed that $I^G_{\kk}$ is radical when $G$ acts by permutations \cite[Theorem 6.1]{ShankWehlau}. 

In \cite[Corollary 2.5]{Neusel1}, Neusel showed that if $G$ has a cyclic $p$-Sylow subgroup which acts by permutations, then $I^G_\kk$ has height at most $n-k$, where $k$ is the number of orbits of $G$ acting on $x_1,\ldots,x_n$. When $p \leq n < 2p$, the $p$-Sylow subgroups of $\fS_n$ are cyclic of order $p$. One can choose a $p$-Sylow subgroup of $\fS_n$ which cyclically permutes $x_1,\ldots,x_p$ and fixes each of $x_{p+1},\ldots,x_n$. Such a subgroup has $1 + (n-p)$ orbits, so $I^{\fS_n}_\kk$ must have height at most $n - (1+n-p) = p-1$. When $\kk = \F_p$, Corollary \ref{cor: Fp transfer} shows that Neusel's bound is sharp, since each $\tilde{J}_n$ is prime of height $p-1$.

\section{A Conjecture for Larger $n$}\label{section: conjecture}

Theorem \ref{thm: main theorem intro} suggests a natural question regarding the cofixed spaces $\Z_{(p)}[x_1,\ldots,x_n]_{\fS_n}\cong I^{\fS_n}$ for $n \geq 2p$. Because the generators for $I^{\fS_n}$ form a regular sequence when $n=p,p+1,\ldots,2p-1$, the resolution of $I^{\fS_n}$ over $\Z_{(p)}[e_1,\ldots,e_n]$ is a Koszul complex. Moreover, since the generators always have degrees $0,1,\ldots,p-1 \mod p$, taking all graded Betti numbers $\beta_{i,j}$ for fixed $i$ gives the same multiset mod $p$. Such stability in the $S^{\fS_n}$-module structure is trivially true for $n < p$, since here $S_{\fS_n}$ is always a free $S^{\fS_n}$-module of rank 1. One may ask to what extent this phenomenon persists. The $p$-Sylow subgroups of $\fS_n$ follow a similar stability pattern for all $n$, and this structure was critical to the proof of Theorem \ref{thm: main theorem intro}. It is conceivable that this phenomenon is present for all $n$.

To state our conjecture, we make a brief definition. Given a $\Z$-graded module $M$ over a $\Z$-graded ring $R$ and some $i\geq 0$, let $A^R_i(M)$ denote the multiset of integers in which $j\in\Z$ occurs exactly $\beta_{i,j}^R(M)$ times. In other words, the multiset $A_i^R(M)$ records the degree shifts appearing in the $i^{\text{th}}$ homological degree in a graded minimal free resolution of $M$ over $R$.

\begin{conj}\label{conj: betti number}
    For each $r\geq 1$, let $R_r:=\kk[e_1,\ldots,e_r]$, $M_r:=\kk[x_1,\ldots,x_r]_{\fS_r}$. Let $m,n$ satisfy $~kp \leq ~m,n < (k+1)p$ for some integer $k$, and assume $\kk = \F_p$ or $\kk = \Z_{(p)}$. Then for each $i\geq 0$, the multisets $A^{R_n}_i(M_n)$ and $A^{R_m}_i(M_m)$ are equal after taking all elements mod $p$.
\end{conj}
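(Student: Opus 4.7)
My plan is to reduce Conjecture~\ref{conj: betti number} to an analogous stability statement for the $p$-Sylow subgroups $P_n\subset\fS_n$, which are constant in structure across each range $[kp,(k+1)p)$ by Lemma~\ref{lem: p-Sylows of Sn jump}, and then to descend to $\fS_n$. By the Base Change Lemma~\ref{lem: base change} together with Proposition~\ref{prop: cofixed space is image of transfer}, it suffices to control the graded Betti numbers of $I^{\fS_n}$ as a module over $R_n=\Z_{(p)}[e_1,\ldots,e_n]$; tensoring with $\F_p$ will then yield the $\F_p$-case with identical shifts mod $p$.

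The first step is a Sylow-level analogue: for $n,n+1\in[kp,(k+1)p)$, the minimal graded free resolutions of $I^{P_n}$ over $S^{P_n}$ and of $I^{P_{n+1}}$ over $S^{P_{n+1}}$ should have identical graded Betti numbers (not merely mod~$p$). Indeed, Lemma~\ref{lem: p-Sylow extension ideal} and Corollary~\ref{cor: invariant ring of p-Sylow} give $S^{P_{n+1}}=S^{P_n}[x_{n+1}]$ and $I^{P_{n+1}}=S^{P_{n+1}}\cdot I^{P_n}$, so tensoring a minimal resolution of $I^{P_n}$ with $\Z_{(p)}[x_{n+1}]$ produces a minimal resolution of $I^{P_{n+1}}$ with the same degree shifts. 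Iterating handles the entire range.

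The crux is descending from $P_n$ to $\fS_n$. Because $[\fS_n:P_n]$ is coprime to $p$, the relative transfer $\Tr^{\fS_n}_{P_n}:S^{P_n}\to S^{\fS_n}$ satisfies $\Tr^{\fS_n}=\Tr^{\fS_n}_{P_n}\circ\Tr^{P_n}$ and is well-behaved over $\Z_{(p)}$; averaging yields a surjection $S^{P_n}\twoheadrightarrow S^{\fS_n}$ that splits the inclusion $S^{\fS_n}\hookrightarrow S^{P_n}$ as $S^{\fS_n}$-modules. One would then try to propagate this splitting through a change-of-rings spectral sequence relating $\operatorname{Tor}^{S^{\fS_n}}_\bullet(I^{\fS_n},\kk)$ to $\operatorname{Tor}^{S^{P_n}}_\bullet(I^{P_n},\kk)$, and to argue that any extra degree shifts introduced are integer multiples of $p$, coming from the degrees of $\fS_n/P_n$-orbit-sum generators of $S^{P_n}$ over $S^{\fS_n}$ (whose orbits outside the Sylow have size divisible by $p$) together with an analysis of their relations.

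The principal obstacle is that $S^{P_n}$ is generally not flat over $S^{\fS_n}$ in the modular regime, so the change-of-rings spectral sequence need not degenerate and the extra degree shifts need not all lie in prescribed residue classes mod~$p$. An alternative, more combinatorial route would be to describe the minimal resolution of $I^{\fS_n}$ directly via syzygies among transfers of special monomials (Theorem~\ref{thm: image of transfer generation}), controlling the degrees of those syzygies through $p$-adic valuations of stabilizer orders of special partitions; however, identifying even a minimal generating set for $I^{\fS_n}$ when $n\geq 2p$ is genuinely new, as Section~\ref{section: brute force containment} already shows the minimal generators need not themselves be transfers of monomials.
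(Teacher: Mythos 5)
This statement is a \emph{conjecture} in the paper, not a theorem: the paper offers no proof, only computational evidence (the Betti tables in Figures~\ref{fig: p2n45}--\ref{fig: p2n7}) together with Construction~\ref{construction} wherein a $\Z/p\Z$-graded change-of-rings map $f:\Z_{(p)}[e_1,\ldots,e_n]\to\Z_{(p)}[e_1,\ldots,e_{n-1}]$ sends $I^{\fS_n}$ to $I^{\fS_{n-1}}$ and carries the Koszul resolution across for $p+1\le n\le 2p-1$; Question~5.4 asks whether such a map exists for general $n$. So there is no ``paper's own proof'' to match against, and any complete argument here would be new.

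Your proposal is candid that it is incomplete, and the two halves deserve separate grades. The Sylow-level step is correct and can be made rigorous: $S^{P_{n+1}}=S^{P_n}\otimes_\kk\kk[x_{n+1}]$ is a polynomial (hence faithfully flat) extension of $S^{P_n}$, $I^{P_{n+1}}$ is the extension ideal by Lemma~\ref{lem: p-Sylow extension ideal}, so tensoring a minimal resolution of $I^{P_n}$ with $S^{P_{n+1}}$ preserves exactness and minimality and yields \emph{identical} Betti numbers across the range $[kp,(k+1)p)$. The descent step, however, is where the argument genuinely breaks, exactly as you flag: $S^{P_n}$ is not flat over $S^{\fS_n}$, so the change-of-rings spectral sequence relating $\operatorname{Tor}^{S^{\fS_n}}_\bullet(I^{\fS_n},\kk)$ to $\operatorname{Tor}^{S^{P_n}}_\bullet(I^{P_n},\kk)$ does not degenerate and does not isolate degree shifts mod $p$. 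Moreover, the heuristic you offer for controlling the extra shifts --- that the $S^{\fS_n}$-module generators of $S^{P_n}$ have degrees divisible by $p$ --- is false; already for $n=p$ the ring $S^{P_p}$ (invariants of a $p$-cycle) has a generator over $S^{\fS_p}$ such as $x_1x_2+x_2x_3+\cdots+x_px_1$ of degree $2$. The splitting you obtain from $\frac{1}{[\fS_n:P_n]}\Tr^{\fS_n}_{P_n}$ is correct and useful, but it does not by itself pin down where new syzygies of $I^{\fS_n}$ land.

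In short: your Sylow-stability observation is sound and is indeed the structural engine behind the paper's Theorem~\ref{thm: main theorem intro} (via the height argument in \S\ref{subsection: p-Sylow}), but it controlled only the \emph{height} of $I^{\fS_n}$ there, not its full resolution. The paper's tentative route to the conjecture is the direct change-of-rings $R_n\to R_{n-1}$ of Construction~\ref{construction}, which bypasses the Sylow descent entirely and sidesteps the non-flatness obstruction you identify; it would be a more promising direction to push than the spectral-sequence approach.
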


We present some evidence for Conjecture \ref{conj: betti number}. All computations were done in \texttt{Macaulay2}, and the data listed below uses $\kk = \F_p$. We first present data for $n$ in the range $\{2p, 2p+1,\ldots,3p-1\}$ for $p=2$ and $p=3$.

When $p=2$, we consider $n \in \{4,5\}$. Below we list the elements of $A_i(M_4)$ and $A_i(M_5)$; the entries in the $i^{\text{th}}$ row and the second column give the degree twists which show up in a minimal free resolution of $M_4, M_5$ over $R_4, R_5$, respectively. We draw the reader's attention to the fact that the rightmost columns of both tables in Figure \ref{fig: p2n45} are identical. When $p=3$, data is available for $n\in\{6,7\}$. The rightmost columns of the tables in Figure \ref{fig: p3n67} are again identical.

\begin{figure}[h!]
\begin{minipage}{0.42\textwidth}
\begin{center}
\begin{tabular}{|Sc|Sl|Sl|}
    \hline
    $i$ & \multicolumn{1}{c|}{$A_i(M_4)$} & \multicolumn{1}{c|}{$A_i(M_4) \mod 4$}\\
    \hline
    $0$ & $0, 1, 2, 3, 6$ & $0, 1, 2, 3, 2$ \\
    \hline
    $1$ & $1, 2, 3, 3, 4, 5, 6$ &  $1, 2, 3, 3, 0, 1, 2$\\
    \hline
    $2$ & $3,4,5,6$ & $3,0,1,2$ \\
    \hline
    $3$ & $6$ & $2$ \\ \hline
\end{tabular}
\end{center}
\end{minipage}
\begin{minipage}{0.49\textwidth}
\begin{center}
\begin{tabular}{|Sc|Sl|Sl|}
    \hline
    $i$ & $A_i(M_5)$ & $A_i(M_5) \mod 4$\\
    \hline
    $0$ & $0,5,2,3,10$ & $0, 1, 2, 3, 2$ \\
    \hline
    $1$ & $5,2,3,7,8,5,10$  & $1, 2, 3, 3, 0, 1, 2$\\
    \hline
    $2$ & $7,8,5,10$ &  $3,0,1,2$ \\
    \hline
    $3$ & $10$ & $2$ \\ \hline
\end{tabular}
\end{center}
\end{minipage}
\caption{Resolution data for $p=2$, $n=4$ (first table) and $p=2$, $n=5$ (second table). }\label{fig: p2n45}
\end{figure}

\begin{figure}[h!]
\begin{minipage}{0.48\textwidth}
\begin{tabular}{|Sc|Sl|Sl|}
    \hline
    $i$ & \multicolumn{1}{c|}{$A_i(M_6)$} & \multicolumn{1}{c|}{$A_i(M_6)\mod6$} \\
    \hline
    $0$ & $0,1,2,4,5,6,8,9,10$ & $0,1,2,4,5,0,2,3,4$ \\
    \hline
    $1$ & $1,2,3,4,5,5,6,6,6,$ & $1,2,3,4,5,5,0,0,0,$ \\
    \, & $7,8,9,9,10,10,11,13,$ & $1,2,3,3,4,4,5,1,$\\
     \, & $14$&  $2$\\
    \hline
    $2$ & $3,5,6,6,7,7,8,9,10,$ & $3,5,0,0,1,1,2,3,4$ \\
    \, &
    $10,11,11,13,14,15$ & $4,5,5,1,2,3$ \\
    \hline
    $3$ & $7,8,10,11,12,15$ & $1,2,4,5,0,3$ \\ \hline
    $4$ & $12$ & $0$ \\ \hline
\end{tabular}
\end{minipage}
\begin{minipage}{0.48\textwidth}
\begin{tabular}{|Sc|Sl|Sl|}
    \hline
    $i$ & \multicolumn{1}{c|}{$A_i(M_7)$} & \multicolumn{1}{c|}{$A_i(M_7)\mod 6$}\\
    \hline
    $0$ & $0,7,2,4,5,12,14,9,10$ & $0,1,2,4,5,0,2,3,4$ \\
    \hline
    $1$ & $7,2,9,4,5,11,6,12,12,$ &  $1,2,3,4,5,5,0,0,0,$\\
    \, & $7,14,9,10,16,17,19,$ &  $1,2,3,3,4,4,5,1,$\\
    \, & $14$ &  $2$\\
    \hline
    $2$ & $9,11,6,12,7,13,14,9,16$ & $3,5,0,0,1,1,2,3,4$\\
    \, & $16,11,17,19,14,21$ & $4,5,5,1,2,3$ \\ 
    \hline
    $3$ & $13,14,16,11,18,21$ & $1,2,4,5,0,3$ \\
    \hline
    $4$ & $18$ & $0$ \\ \hline
\end{tabular}
\end{minipage}
\caption{Resolution data for $p=3$, $n=6$ (first table) and $p=3$, $n=7$ (second table). }\label{fig: p3n67}
\end{figure}

This data may suggest that something stronger than Conjecture \ref{conj: betti number} holds, namely that $A_i^{R_n}(M_n)$ and $A_i^{R_m}(M_m)$ are the same mod $2p$ when $2p \leq n,m < 3p$. So far, no counterexample to this claim has been found. One may hope that more generally, if $kp \leq m,n < (k+1)p$, then $A_i^{R_n}(M_n)$ and $A_i^{R_m}(M_m)$ are the same mod $kp$. Unfortunately, data from $p=2, n \in \{6,7\}$ shows that this is not true. However, the multisets still agree mod 2. This can be seen in Figures \ref{fig: p2n6} and \ref{fig: p2n7}. Here, the entries $j^\ell$ indicate that $j$ appears in the multiset $\ell$ times.

\begin{figure}[h]
\begin{tabular}{|Sc|Sl|Sc|Sc|Sc|}
    \hline
    $i$ & \multicolumn{1}{c|}{$A_i(M_6)$} & $A_i(M_6) \mod 2$ & $A_i(M_6)\mod 4$ & $A_i(M_6)\mod 6$ \\
    \hline
    $0$ & $0,1,3,5,6,6,7,8,10,15$ & $0^5, 1^5$ & $0^2, 1^2, 2^3, 3^3$ & $0^3, 1^2,2,3^2,4,5$ \\
    \hline
    \multirow{2}{*}{$1$} & $1,3,4,5,6,6,6,7,7,8,8,9,10,$ & \multirow{2}{*}{$0^9,1^{10}$} & \multirow{2}{*}{$0^4,1^4,2^5,3^6$} & \multirow{2}{*}{$0^4, 1^4, 2^2, 3^3, 4^3, 5^3$} \\
    \, & $10,11,11,12,13,15$ & \, & \, & \, \\
    \hline
    \multirow{2}{*}{$2$} & $4,6,7,8,9,9,10,10,11,11,12,$ & \multirow{2}{*}{$0^8,1^7$} & \multirow{2}{*}{$0^4,1^3,2^4,3^4$} & \multirow{2}{*}{$0^3,1^2,2^2,3^3,4^3,5^2$} \\
    \, & $12,13,14,15$ & \, & \, & \, \\
    \hline
    $3$ & $9,10,12,14,15,15$ & $0^3,1^3$ & $0,1, 2^2, 3^2$ & $0,2,3^3,4$\\
    \hline
    $4$& $15$ & $1$ & $3$ & $3$\\
    \hline
\end{tabular}
\caption{Degree shifts appearing in the $\F_2[e_1,\ldots,e_6]$-resolution of $\F_2[x_1,\ldots,x_6]_{\fS_6}$.}\label{fig: p2n6}
\end{figure}

\begin{figure}[h]
\begin{tabular}{|Sc|Sl|Sc|Sc|Sc|}
    \hline
    $i$ & \multicolumn{1}{c|}{$A_i(M_7)$} & $A_i(M_7) \mod 2$ & $A_i(M_7)\mod 4$ & $A_i(M_7)\mod 6$ \\
    \hline
    $0$ & $0,3,5,6,7,9,10,12,14,21$ & $0^5, 1^5$ & $0^2, 1^3, 2^3, 3^2$ & $0^3, 1,2,3^3,4,5$ \\
    \hline
    \multirow{2}{*}{$1$} & $3,5,6,7,8,9,9,10,11,12,12,$ & \multirow{2}{*}{$0^9,1^{10}$} & \multirow{2}{*}{$0^4,1^6,2^5,3^4$} & \multirow{2}{*}{$0^3, 1^3, 2^3, 3^4, 4^3, 5^3$} \\
    \, & $13,14,14,16,17,19,21$ & \, & \, & \, \\
    \hline
    \multirow{2}{*}{$2$} & $8,9,10,11,12,13,14,14,15$ & \multirow{2}{*}{$0^8,1^7$} & \multirow{2}{*}{$0^4,1^4,2^4,3^3$} & \multirow{2}{*}{$0^2,1^2,2^3,3^3,4^3,5^2$} \\
    \, & $16,16,17,18,19,21$ & \, & \, & \, \\
    \hline
    $3$ & $14,15,16,18,21,21$ & $0^3,1^3$ & $0,1^2, 2^2, 3$ & $0,2,3^3,4$\\
    \hline
    $4$ & $21$ & $1$ & $1$ & $3$\\ \hline
\end{tabular}
\caption{Degree shifts appearing in the $\F_2[e_1,\ldots,e_7]$-resolution of $\F_2[x_1,\ldots,x_7]_{\fS_7}$.}\label{fig: p2n7}
\end{figure}

Conjecture \ref{conj: betti number} is purely a numerical statement, but from Theorem \ref{thm: main theorem intro} it is clear that the resolutions of $M_n = I^{\fS_n}_{\Z_{(p)}}$ are related algebraically. We exhibit this relationship via a change of rings.

\begin{construction}
Let $p+1\leq n \leq 2p-1$. Consider the rings $R = \Z_{(p)}[e_1,\ldots,e_n]$ and $S=\Z_{(p)}[e_1,\ldots,e_{n-1}]$. Define a homomorphism of $\Z_{(p)}$-algebras $f:R\to S$ by
$$
f(e_j) = \begin{cases}
e_j & \text{if }j\neq n \\
e_{n-p}e_p - e_{n-p} & \text{if }j=n
\end{cases}.
$$
We give $R,S$ a $(\Z/p\Z)$-grading by setting $\deg(e_i) = i\mod p$. Then $f$ is homogeneous with respect to this grading and $f(I^{\fS_n}) = I^{\fS_{n-1}}$. We give $S$ an $R$-module structure via the multiplication map $$R\times S\to S, \quad(r,s)\mapsto f(r)s.$$

In particular, the top-degree generator $e_{n-p}e_p - e_n$ of $I^{\fS_n}$ acts on an element $s\in S$ as 
\begin{equation}\label{eq:change of rings action}
(e_{n-p}e_p-e_n)\cdot s = f(e_{n-p}e_p-e_n)s = (e_{n-p}e_p-(e_{n-p}e_p-e_{n-p}))s=e_{n-p}s.
\end{equation}
Let $K_\bullet$ be the Koszul complex which resolves $R/I^{\fS_n}$ over $R$. By (\ref{eq:change of rings action}), the differentials in the complex of $S$-modules $K_\bullet\otimes_R S$ are obtained from $K_\bullet$ by replacing all matrix entries $e_{n-p}e_p - e_n$ with $e_{n-p}$; in other words, $K_\bullet \otimes_R S$ is a Koszul complex of $S$-modules on the generators of $I^{\fS_{n-1}}$. Because the generators of $I^{\fS_{n-1}}$ form an $S$-regular sequence, the complex $K_\bullet\otimes_R S$ remains exact; moreover it resolves $S/I^{\fS_{n-1}}$ over $S$.
\end{construction}

\begin{question}
Let $kp+1\leq n < (k+1)p$ and let $C_\bullet$ be a minimal free resolution of $R/I^{\fS_n}$ over $R=\Z_{(p)}[e_1,\ldots,e_n]$. Does there exist a map of $\Z/p\Z$-graded $\Z_{(p)}$-algebras $f:R\to S = \Z_{(p)}[e_1,\ldots,e_{n-1}]$ such that $C_\bullet\otimes_R S$ resolves $S/I^{\fS_{n-1}}$ over $S$?
\end{question}

\appendix
\section{Minimal free resolutions with local coefficient rings}\label{section: appendix}

In this section, we verify that minimal free resolutions over the ring $\Z_{(p)}[e_1,\ldots,e_n]$ are unique up to isomorphism. We closely follow the structure of the proof of this fact for polynomial rings $\kk[x_1,\ldots,x_n]$, where $\kk$ is a field, found in \cite{Peeva}. We modify the arguments taking inspiration from proofs involving the local case (see, e.g., \cite{matsumura}).

Let $(A, \aa, K)$ be a Noetherian local ring. Consider the polynomial ring $R = A[z_1,\ldots,z_n]$, which we make a graded $A$-algebra by setting $\deg(a) = 0$ for all $a\in A$ and $\deg(z_i)=d_i>0$ for all $1\leq i\leq n$. Set $\mm = \aa R + (z_1,\ldots,z_n)R\subset R$. This is the unique homogeneous maximal ideal of $R$.

\begin{lemma}[Generalized graded Nakayama Lemma]\label{lem: upgraded Nakayama}
    Let $U$ be a finitely generated graded $R$-module and let $J\subset R$ be a proper homogeneous ideal. Then the following hold:
    \begin{enumerate}
        \item[(1)] if $JU = U$ then $U=0$, and
        \item[(2)] if $W\subset U$ is a graded $R$-submodule with $U = W + JU$, then $U = W$.
    \end{enumerate}
\end{lemma}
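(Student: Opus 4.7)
The plan is to reduce both statements to the classical Nakayama Lemma applied to the local ring $(A,\aa)$ by passing to a single graded piece. The crucial preliminary observation is that, because $J$ is a proper homogeneous ideal of $R$, the degree-zero component $J_0 \subseteq A$ is a proper ideal of $A$: if $1 \in J_0$ then $J = R$. Since $A$ is local, this forces $J_0 \subseteq \aa$.

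For (1), I would argue by contradiction. Assume $U \neq 0$. Fix finitely many homogeneous generators $u_1, \dots, u_k$ of $U$ over $R$; then the set of degrees $d$ for which $U_d \neq 0$ is bounded below by $\min_i \deg(u_i)$. Let $d_0$ be the smallest such degree. The key computation is
\begin{equation*}
U_{d_0} = (JU)_{d_0} = \sum_{e \geq 0} J_e \cdot U_{d_0 - e} = J_0 \cdot U_{d_0},
\end{equation*}
where the last equality uses minimality of $d_0$ to kill the terms with $e > 0$. Next, $U_{d_0}$ is a finitely generated $A$-module: it equals $\sum_i R_{d_0 - \deg(u_i)} u_i$, and each $R_{d_0 - \deg(u_i)}$ is the $A$-span of finitely many weighted monomials in the $z_j$. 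Since $J_0 \subseteq \aa$, the equation $U_{d_0} = J_0 U_{d_0}$ yields $\aa \cdot U_{d_0} = U_{d_0}$, and the classical Nakayama Lemma for the Noetherian local ring $(A, \aa)$ forces $U_{d_0} = 0$, contradicting the choice of $d_0$.

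For (2), I would apply (1) to the finitely generated graded quotient $U/W$. The hypothesis $U = W + JU$ gives $J \cdot (U/W) = (JU + W)/W = U/W$, so (1) yields $U/W = 0$, i.e., $U = W$.

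The main subtlety is the finite generation of the single graded piece $U_{d_0}$ over $A$; this is where the specific form of $R$ as a polynomial ring over $A$ with positive degrees on the variables is essential, as it guarantees that each $R_e$ is a finitely generated $A$-module. Everything else is formal manipulation reducing the graded statement to the ungraded Nakayama Lemma over $(A, \aa)$.
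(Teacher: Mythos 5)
Your proof is correct, and while it shares the paper's key idea of isolating the lowest nonzero degree, the execution is genuinely different. The paper works directly with a minimal homogeneous generating set $\cG$ of $U$: it takes a generator $m$ of lowest degree, uses $m \in J_0 \cdot U_{\deg m}$ to write $m = \sum_{m'} a_{m'} m'$ with $a_{m'} \in \aa$, and then invokes that $1 - a_m$ is a unit in $A$ (hence in $R$) to contradict minimality of the generating set. This amounts to a self-contained, graded replay of the classical Nakayama ``unit trick.'' You instead treat the entire bottom graded piece $U_{d_0}$ as a finitely generated $A$-module satisfying $\aa U_{d_0} = U_{d_0}$ (via the observation $J_0 \subseteq \aa$, which is also the hinge of the paper's argument even if stated less explicitly there), and apply the classical Nakayama Lemma over the local ring $(A,\aa)$ as a black box. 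Your route is more modular and arguably cleaner to read, trading self-containment for a one-line reduction to the ungraded statement; it also makes the role of the hypotheses explicit, in particular that the positivity of the degrees $d_i$ is what makes $R_e$, and hence each $U_e$, a finitely generated $A$-module and bounds the grading of $U$ from below. Both approaches handle part (2) identically by passing to $U/W$.
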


\begin{proof}
    First we show (1). Assume that $JU=U$ and $U$ is nonzero. Fix a finite system $\cG$ of homogeneous minimal $R$-module generators for $U$. Let $m$ be an element of $\cG$ of minimal degree. Then $U_j = 0$ for $j < \deg(m)$. Every element of $JU$ is either of larger degree than $\deg(m)$, or, since $J$ is a proper ideal, must lie in $(JU)_{\deg(m)} = J_0 \cdot U_{\deg(m)} \subset \aa R \cdot U_{\deg(m)}$. By assumption, $m\in JU$, hence $m\in \aa R\cdot U_{\deg(m)}$. Fix a subset $\cG'$ of $\cG$ that minimally generates $U_{\deg(m)}$. Then we can write $m = \sum_{m'\in\cG'} a_{m'}m'$, where $a_{m'}\in \aa$. Since $m$ is a minimal generator, it must appear on the right-hand side with nonzero coefficient, hence we have
    \begin{align*}
        m - a_m m &= \sum_{m \neq m'\in \cG'}a_{m'}m' \\
        (1-a_m) m &= \sum_{m\neq m'\in \cG'}a_{m'}m'.
    \end{align*}
    But $a_m\in \aa = \rad(A)$, hence $1-a_m$ is a unit in $A$, and consequently is also a unit in $R$. This contradicts minimality of $m$ as a generator of $U$, hence $U=0$.
    
    (2) follows by applying (1) to the graded $R$-module $U/W$.
\end{proof}

\begin{thm}[Analogue to Foundational Theorem 2.12 in \cite{Peeva}]\label{thm: foundational}
    Let $U$ be a finitely generated graded $R$-module and set $\overline{U}:= U/\mm U$. Then $\overline{U}$ is a finite dimensional graded $K$-vector space. Let $p = \dim_K\overline{U}$.
    \begin{enumerate}
        \item[(1)] Let $\{\overline{u}_1, \ldots,\overline{u}_p\}$ be a homogeneous basis for $\overline{U}$. For each $1\leq i \leq p$, choose a homogeneous preimage $u_i\in U$ of $\overline{u}_i$. Then $\{u_1,\ldots,u_p\}$ is a minimal homogeneous system of generators for $U$.
        \item[(2)] Every minimal system of homogeneous generators of $U$ is obtained as in (1).
        \item[(3)] Every minimal system of homogeneous generators of $U$ has $p$ elements. Set $q_i = \dim_K(\overline{U}_i)$ for each $i$. Then every minimal system of homogeneous generators of $U$ contains $q_i$ elements of degree $i$.
        \item[(4)] Let $\{u_1,\ldots,u_p\}$ and $\{v_1,\ldots,v_p\}$ be two minimal systems of homogeneous generators of $U$, and let $v_s = \sum_j r_{js}u_j$ with $r_{js}\in R$ for each $s$. For all $s,j$ set $c_{js}$ to be the homogeneous component of $r_{js}$ of degree $\deg(v_s) - \deg(u_j)$. Then the following three properties hold: $v_s = \sum_j c_{js}u_j$ for all $s$, $\det([c_{js}])\in A^\times$, and $[c_{js}]$ is an invertible matrix with homogeneous entries.
    \end{enumerate}
\end{thm}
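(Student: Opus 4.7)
The common thread throughout all four parts is the identification $R/\mm \cong A/\aa = K$, which makes $\overline{U} = U/\mm U$ a finite-dimensional graded $K$-vector space, together with the Generalized Graded Nakayama Lemma \ref{lem: upgraded Nakayama} as the main bridge between $U$ and $\overline{U}$. The unifying statement I plan to establish is: a homogeneous subset of $U$ is a minimal system of generators if and only if its image in $\overline{U}$ is a homogeneous $K$-basis. All four claims will drop out of this equivalence plus a change-of-basis analysis.

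For (1), I let $\{u_i\}$ be homogeneous lifts of a homogeneous basis of $\overline{U}$ and set $W = \sum_i R u_i$. Since the images span $\overline{U}$, one has $U = W + \mm U$, and Lemma \ref{lem: upgraded Nakayama}(2) yields $W = U$. For minimality, if some $u_k \in \sum_{i\neq k} R u_i$, I write $u_k = \sum_{i\neq k} r_i u_i$ and, using homogeneity of $u_k$, replace $r_i$ by its component of degree $\deg(u_k) - \deg(u_i)$. Components of positive degree lie in $(z_1,\ldots,z_n)R \subseteq \mm$ and vanish in $\overline{U}$, so reducing mod $\mm$ produces a nontrivial $K$-linear dependence among $\overline{u}_k$ and the $\overline{u}_i$ with $\deg(u_i) = \deg(u_k)$, contradicting the basis hypothesis. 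Part (2) is the converse direction of the same equivalence: given a minimal system $\{v_s\}$, a $K$-linear dependence among the $\overline{v}_s$ would lift (via Nakayama applied to the submodule generated by all but one $v_s$) to a proper subset of $\{v_s\}$ still generating $U$, contradicting minimality. So $\{\overline{v}_s\}$ is $K$-linearly independent and spans $\overline{U}$, hence is a homogeneous basis. Part (3) follows immediately: any minimal homogeneous system has cardinality $\dim_K \overline{U} = p$, and matching degrees forces the number of generators of degree $i$ to equal $\dim_K \overline{U}_i = q_i$.

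For (4), given two minimal systems $\{u_j\}$ and $\{v_s\}$, I write $v_s = \sum_j r_{js} u_j$ and define $c_{js}$ as the homogeneous component of $r_{js}$ of degree $\deg(v_s) - \deg(u_j)$; homogeneity of $v_s$ then forces $v_s = \sum_j c_{js} u_j$. Because $c_{js} \in R_{\deg(v_s)-\deg(u_j)}$, one has $c_{js} = 0$ when $\deg(v_s) < \deg(u_j)$, $c_{js} \in (z_1,\ldots,z_n)R \subseteq \mm$ when $\deg(v_s) > \deg(u_j)$, and $c_{js} \in R_0 = A$ when the two degrees agree. Ordering both bases by increasing degree makes $[c_{js}]$ block upper triangular over $R$, with diagonal blocks having entries in $A$. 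Reducing mod $\mm$ kills the strictly-upper blocks and leaves each diagonal block as a $K$-linear change-of-basis matrix between two bases of the graded piece $\overline{U}_d$, hence invertible over $K$. Therefore each diagonal block of $[c_{js}]$ has determinant in $A \setminus \aa = A^\times$, so $\det[c_{js}] \in A^\times$, and the adjugate formula produces an inverse matrix over $R$ whose entries are automatically homogeneous of the correct bidegrees.

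\textbf{Main obstacle.} Parts (1)--(3) are essentially formal once Lemma \ref{lem: upgraded Nakayama} is in hand. The substantive step is (4): one must simultaneously track degree information to establish the block upper-triangularity of $[c_{js}]$ and to recognize its diagonal blocks, a priori merely matrices over $A$, as honest $K$-linear change-of-basis matrices on each $\overline{U}_d$ after reducing mod $\mm$. The passage from ``unit determinant on each diagonal block over $K$'' to ``unit determinant over $A$'' is where locality of $A$ is essential; after that, invertibility over $R$ and homogeneity of the inverse entries are automatic.
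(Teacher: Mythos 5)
Your proposal is correct and follows essentially the same route as the paper: both rest on the Generalized Graded Nakayama Lemma, derive (1)--(3) from the equivalence between minimal homogeneous generating sets of $U$ and homogeneous $K$-bases of $\overline{U}$, and prove (4) by ordering generators by degree, observing block upper-triangularity of $[c_{js}]$, reducing mod $\mm$, and using locality of $A$ to lift unit determinants from $K$ back to $A$. The only minor variation is in (2), where you apply Nakayama directly to the span of a proper subset rather than reducing to part (1), and in (1), where you are slightly more explicit about extracting the degree-appropriate homogeneous component of each coefficient before reducing mod $\mm$; both are small expository refinements of the paper's argument rather than a different approach.
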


\begin{proof}
    To show (1), first note that $U = \mm U + Ru_1 + \cdots + Ru_p$. By Lemma \ref{lem: upgraded Nakayama} (2), we have that $U = Ru_1 + \cdots +Ru_p$, hence $\{u_1,\ldots,u_p\}$ generates $U$. If this is not a minimal generating set, then there is some relation (possibly after renumbering) of the form $u_1 = \alpha_2u_2 + \cdots + \alpha_p u_p$, for $\alpha_i\in R$. Descending to $\overline{U}$ gives a relation $\overline{u}_1 = \overline{\alpha}_2\overline{u}_2 + \cdots + \overline{\alpha}_p\overline{u}_p$, where $\overline{\alpha}_i$ is the image of $\alpha_i$ in $\overline U$. This contradicts that $\{\overline{u}_1,\ldots,\overline{u}_p\}$ is a $K$-basis over $\overline{U}$, hence $\{u_1,\ldots,u_p\}$ must minimally generate $U$.
    
    To prove (2), assume that $\{u_1,\ldots,u_p\}$ is a minimal system of homogeneous generators of $U$. Then $\{\overline{u}_1,\ldots,\overline{u}_p\}$ generates $\overline{U}$. If there is a linear dependence among $\{\overline{u}_1,\ldots,\overline{u}_p\}$, then choose a proper subset $\{\overline{u}_{i_1},\ldots,\overline{u}_{i_q}\}$ that is a $K$-basis of $\overline{U}$. By (1), the preimages $\{u_{i_1},\ldots,u_{i_q}\}$ generate $U$ as an $R$-module, contradicting minimality of the generating set $\{u_1,\ldots,u_p\}$. Hence $\{\overline{u}_1,\ldots,\overline{u}_p\}$ must be a $K$-basis for $\overline U$.
    
    Statement (3) follows from (1) and (2).
    
    To show (4), let $\{u_1,\ldots,u_p\}$ and $\{v_1,\ldots,v_p\}$ be two minimal sets of homogeneous $R$-module generators of $U$. Assume that the generators in each set are ordered in increasing degree. By homogeneity, we know that $v_s = \sum_{j} c_{js}u_j$ for all $s$. Let $C$ be the matrix with entries $c_{js}$. For each $i$, let $B_i$ denote the $q_i\times q_i$ block on the diagonal of $C$ corresponding to the generators of degree $i$. Then $\deg(u_j)\geq \deg(v_s)$ for $j>s$, hence $c_{js} = 0$ if $j>s$ and $c_{js}$ is outside the block $B_{\deg(v_s)}$. The entries in the blocks $B_i$ are of degree 0, hence they lie in $A$, and $\det(C) = \prod_i \det(B_i)$. Let $\overline{C}$ denote the matrix with entries $\overline{c}_{js}$, where $\overline{c}_{js}$ is the image of $c_{js}$ in $K = R/\mm$. Then $\overline{C}$ is a matrix with its only nonzero entries appearing in the blocks $\overline{B}_i$. Note that $\overline{v}_s = \sum_j \overline{c}_{js} \overline{u}_j$ for all $s$, so $\overline{C}$ is a change of basis matrix for $K$. It follows that $\overline{C}$ is invertible and $\det(\overline{C})$ is a unit. Because $\det(C)= \prod_i \det(B_i)$ lies in $A$, we have $\det(C) = \det(\overline{C})+a$, where $a\in \aa$. Since $\det(\overline{C})$ is a unit, then so is $\det(C)$ since $a\in \rad(A)$. 
\end{proof}

\begin{definition}\label{def: trivial complex}
    A complex of the form 
    $$0 \to R(-p) \overset{1}{\to} R(-p) \to 0$$
    is called a \textit{short trivial complex}. A direct sum of short trivial complexes, possibly placed in different homological degrees, is called a \textit{trivial complex}.
\end{definition}

\begin{thm}[Analogue to Theorem 7.5(2) of \cite{Peeva}]\label{thm: peeva 7.5(2)}
    Let $U$ be a finitely generated graded $R$-module. Let $\mathbf{F}$ be a minimal graded free resolution of $U$, and let $\mathbf{G}$ be a graded free resolution of $U$. Then $\mathbf{G}\cong \mathbf{F}\oplus\mathbf{T}$ as complexes, where $\mathbf{T}$ is some trivial complex.
\end{thm}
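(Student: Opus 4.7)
The plan is to produce a chain-level splitting $\mathbf{G} \cong \mathbf{F} \oplus \mathbf{T}$ by constructing a chain map $\varphi: \mathbf{F} \to \mathbf{G}$ that is split injective with complement the desired trivial complex. To begin, I would invoke the standard lifting property of free resolutions to obtain graded chain maps $\varphi: \mathbf{F} \to \mathbf{G}$ and $\psi: \mathbf{G} \to \mathbf{F}$ both lifting $\operatorname{id}_U$, together with a graded chain homotopy $h$ satisfying $\psi\varphi - \operatorname{id}_{\mathbf{F}} = d^{\mathbf{F}} h + h\, d^{\mathbf{F}}$. This step is formal and uses neither minimality of $\mathbf{F}$ nor the local structure of $A$.

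The central step is to deduce from the minimality of $\mathbf{F}$ that $\psi \circ \varphi$ is itself an isomorphism of complexes. Minimality means every matrix entry of every differential in $\mathbf{F}$ lies in $\mm$; reducing the homotopy identity modulo $\mm$ therefore annihilates both $d^{\mathbf{F}} h$ and $h\, d^{\mathbf{F}}$, leaving $\overline{(\psi\varphi)_i} = \operatorname{id}_{\overline{F}_i}$ on each $\overline{F}_i := F_i / \mm F_i$. Applying Theorem \ref{thm: foundational}(4) to the endomorphism $(\psi\varphi)_i$ of $F_i$ --- viewed as the change-of-basis matrix between a fixed homogeneous basis of $F_i$ and its image --- shows that $(\psi\varphi)_i$ is represented by a matrix whose determinant is a unit of $A$, so each $(\psi\varphi)_i$ is an isomorphism. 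Hence $\psi\varphi$ is an isomorphism of graded complexes, the chain map $\rho := (\psi\varphi)^{-1} \circ \psi$ is a retraction of $\varphi$, and $\mathbf{T} := \ker(\rho)$ gives the decomposition $\mathbf{G} \cong \mathbf{F} \oplus \mathbf{T}$ as graded complexes of $R$-modules.

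To conclude, I would verify that $\mathbf{T}$ is a trivial complex. Each $T_i$ is a direct summand of the finitely generated graded free $R$-module $G_i$, hence is graded projective; combining Theorem \ref{thm: foundational}(1) with Lemma \ref{lem: upgraded Nakayama}(1) shows any finitely generated graded projective $R$-module is free (lift a minimal generating set to a surjection from a free module of matching rank, split it, and observe that the complementary summand has trivial mod-$\mm$ reduction and so vanishes by Nakayama). Because $\mathbf{F}$ and $\mathbf{G}$ both resolve $U$, the splitting forces $H_i(\mathbf{T}) = 0$ for all $i$, so $d_1^{\mathbf{T}}: T_1 \twoheadrightarrow T_0$ is surjective; freeness of $T_0$ splits this surjection as $T_1 \cong T_0 \oplus T_1'$, peeling off a direct sum of short trivial complexes (one copy of $R(-p) \xrightarrow{1} R(-p)$ for each graded summand $R(-p)$ of $T_0$). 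The residual complex $\cdots \to T_2 \to T_1' \to 0$ remains acyclic with free terms, so iterating decomposes $\mathbf{T}$ entirely as a direct sum of short trivial complexes. The principal obstacle is the minimality step: promoting the mod-$\mm$ identity to a genuine isomorphism of graded free $R$-modules, for which Theorem \ref{thm: foundational}(4) and the local structure of $A$ (every element congruent to $1$ modulo $\aa$ is a unit) are essential; the remaining homological-algebra bookkeeping is formal.
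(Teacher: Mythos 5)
Your proposal follows essentially the same route as the paper's proof: lift $\id_U$ to chain maps $\varphi,\psi$ with a homotopy; use minimality of $\mathbf{F}$ to conclude $\psi\varphi\equiv\id\pmod{\mm}$; deduce $\psi\varphi$ is an isomorphism from the graded-local structure of $R$; and split off $\mathbf{T}=\ker$ of the retraction. The one place you diverge is that you invoke Theorem \ref{thm: foundational}(4) to get invertibility of $(\psi\varphi)_i$, whereas the paper redoes the block-triangular matrix analysis from scratch; your route is cleaner but depends on the observation (which you do supply, via the mod-$\mm$ identity and Theorem \ref{thm: foundational}(1)) that the images $(\psi\varphi)_i(u_j)$ themselves form a minimal homogeneous generating set of $F_i$, so that A.2(4) applies. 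You also spell out the verification that $\mathbf{T}$ is a trivial complex --- graded projective direct summands of free modules are free via Theorem \ref{thm: foundational}(1)/(3) and Lemma \ref{lem: upgraded Nakayama}(1), acyclicity of $\mathbf{T}$ follows from additivity of homology over the splitting, and then one peels off short trivial complexes inductively --- which the paper omits by pointing to Peeva. One small inconsistency in your framing: you describe this last portion as ``formal,'' but as you yourself note, the projective-implies-free step leans on the graded Nakayama lemma and hence on the local structure of $A$, so it is not entirely coefficient-free; the substance is nonetheless correct.
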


\begin{proof}
    By \cite[Lemma 6.7]{Peeva}, the identity map $\id_U:U \to U$ induces graded maps of complexes $\varphi:\mathbf{F}\to \mathbf{G}$ and $\psi:\mathbf{G}\to\mathbf{F}$ having degree 0. Moreover, there exists a graded homotopy $h$ of internal degree 0 such that
    $$\id_i - \psi_i\varphi_i = d_{i+1}h_i + h_{i-1}d_i:F_i \to F_i$$
    for each $i$. Since $\mathbf{F}$ is minimal, we can repeatedly apply the fact that $\Im(d_i) \subseteq \mm F_{i-1}$ for each $i$ to obtain that $\Im(\id_i - \psi_i\varphi_i) \subseteq \mm F_i$.
    
    Choose a homogeneous basis for $F_i$, ordering it so that the degrees of the basis elements increase. Let $C = [c_{rj}]$ be the matrix of $\psi_i\varphi_i$ with respect to this ordered basis. Then $C$ has square blocks $B_j$ along the diagonal with entries in $A$, and all entries below the blocks are zero. The matrix of $\id_i - \psi_i\varphi_i$ is $E-C$, where $E$ is the identity matrix of the correct dimension. Since $\Im(\id_i - \psi_i\varphi_i)\subseteq \mm F_i$, the matrix $E-C$ has entries in $\mm$. Thus, since the diagonal entries of $E$ are 1, the diagonal entries of $C$ must also be 1. The remaining entries in the blocks $B_j$ must lie in $\aa$, else $E-C$ would have entries that are units in $R$. We have $\det(C) = \prod_j \det(B_j)$. Modding out by $\mm$, we have that $\overline{C}$ must be the identity matrix, hence has determinant 1. But $\det(\overline{C}) = \prod_j \det(\overline{B}_j)$, so each $\overline{B}_j$ has determinant a nonzero element of $K$. Hence, $\det(C) = \prod_j (\det(\overline{B}_j) + a_j)$, where $a_j\in\aa$, so this is invertible in $A\subset R$. Thus, $\psi\varphi:\mathbf{F}\to \mathbf{F}$ is an isomorphism. Let $\xi:\mathbf{F}\to\mathbf{F}$ be its inverse. Then 
    $$\mathbf{F}\overset{\varphi}{\to}\mathbf{G}\overset{\xi\psi}{\to}\mathbf{F}$$
    is a splitting. Write $\mathbf{T} = \ker(\xi\psi)$. Then $\mathbf{G}\cong \varphi(\mathbf{F}) \oplus \mathbf{T}$ as graded modules. It remains to show that this is an isomorphism of chain complexes and that $\mathbf{T}$ is a trivial complex. The rest of the proof (see \cite[p.37]{Peeva}) does not depend on the coefficient ring of $R$, hence we omit it.
\end{proof}

\newpage

\bibliographystyle{amsplain}
\bibliography{biblio}

\end{document}